\documentclass[a4paper,11pt,reqno]{amsart}
\usepackage{amsfonts,amsmath,amssymb,amsthm}
\usepackage{graphicx}
\usepackage{fixmath}
\usepackage{hyperref}
\usepackage{tikz}
\usepackage{a4wide}
\usepackage{cite}

\newtheorem{theo}{Theorem}
\newtheorem{lemma}{Lemma}
\newtheorem{cor}{Corollary}
\newtheorem{conj}{Conjecture}


\newcommand{\Prob}{\mathbb{P}}
\newcommand{\Ex}{\mathbb{E}}

\newcommand{\spp}{P}
\newcommand{\prs}{\mathcal{P}}
\newcommand{\st}{s}
\newcommand{\tot}{T}

\newcommand{\Sur}{\operatorname{Sur}}

\begin{document}

\thispagestyle{plain}

\title{On the probability that a random subtree is spanning}

\author{Stephan Wagner}
\thanks{The author gratefully acknowledges financial support from a Swedish Foundations' Starting
Grant from the Ragnar S\"oderberg Foundation.}
\address{Stephan Wagner\\
        Department of Mathematical Sciences\\
        Stellenbosch University\\
        Private Bag X1\\
        Matieland 7602\\
        South Africa
\and
Department of Mathematics \\
Uppsala Universitet \\
Box 480 \\
751 06 Uppsala \\
Sweden
}
\email{swagner@sun.ac.za,stephan.wagner@math.uu.se}

\subjclass[2010]{05C05; 05C80}
\keywords{subtrees, spanning trees, random graphs, convergence in probability}

\begin{abstract}
We consider the quantity $\spp(G)$ associated with a graph $G$ that is defined as the probability that a randomly chosen subtree of $G$ is spanning. Motivated by conjectures due to Chin, Gordon, MacPhee and Vincent on the behaviour of this graph invariant depending on the edge density, we establish first that $\spp(G)$ is bounded below by a positive constant provided that the minimum degree is bounded below by a linear function in the number of vertices. Thereafter, the focus is shifted to the classical Erd\H{o}s-R\'enyi random graph model $G(n,p)$. It is shown that $\spp(G)$ converges in probability to $e^{-1/(ep_{\infty})}$ if $p \to p_{\infty} > 0$ and to $0$ if $p \to 0$.
\end{abstract}

\maketitle

\section{Introduction}
\label{sec:intro}

The study of the distribution of subtrees of a tree goes back to Jamison's work~\cite{Jamison1983average,Jamison1984Monotonicity} in the 1980s. Recently, there has been renewed interest especially in the average order of subtrees of a tree, and several problems left open by Jamison have been resolved \cite{Wagner2016local,Haslegrave2014Extremal,Vince2010average,Mol2019Maximizing}. The recent paper~\cite{Chin2018Subtrees} by Chin, Gordon, MacPhee and Vincent  initiated the investigation of several graph invariants based on subtrees of an arbitrary graph, not necessarily a tree. 
One of these quantities is the probability $\spp(G)$ that a randomly chosen subtree of a graph $G$ is spanning. Clearly, this probability is positive if and only if the graph is connected.

More formally, let $\st_k(G)$ be the number of subtrees of a graph $G$ of order $k$---in other words, the number of (not necessarily induced) $k$-vertex subgraphs of $G$ that are trees. In particular, if $n$ is the number of vertices of $G$, then $\st_n(G)$ is the number of spanning trees. We write $\tot(G) = \st_1(G) + \st_2(G) + \cdots + \st_n(G)$ for the total number of subtrees of $G$. Note that
$$\spp(G) = \frac{\st_n(G)}{\tot(G)} = \frac{\st_n(G)}{\sum_{k=1}^n \st_k(G)}$$
for every graph $G$ with $n$ vertices.

\medskip

Chin et al.~proved in~\cite{Chin2018Subtrees} (and earlier in~\cite{Chin2015Pick}) that $P(K_n)$, i.e., the probability that a random subtree of the complete graph $K_n$ is spanning, tends to $e^{-1/e}$ as $n \to \infty$. For complete bipartite graphs, they found that $\lim_{n \to \infty} P(K_{n,n}) = e^{-2/e}$. Looking at very sparse graphs, on the other hand, one notices that the probability $\spp(G)$ is very small, e.g. when the graph $G$ is itself a tree. In this case, it clearly goes to $0$ as the number of vertices of $G$ goes to $\infty$.

\medskip

This leads to the natural conjecture that $\spp(G)$ is ``small'' (in a certain sense) if the graph $G$ is sparse, and ``large'' if $G$ is dense. However, it is quite difficult to make this statement precise. As Chin et al.~already point out themselves, there are rather dense graphs for which the probability is still quite small: for example, let $G$ be a complete graph $K_n$ with a path of length $\omega(n)$ attached to one of the vertices. As long as $\omega(n)$ goes to $\infty$ (arbitrarily slowly with $n$), we have $\spp(G) \leq \frac{1}{\omega(n)} \to 0$, while the edge density of $G$ is close to $1$.

Chin et al.~\cite{Chin2018Subtrees} suggested the following conjecture, which avoids such pathological cases.

\begin{conj}[Chin et al.~\cite{Chin2018Subtrees}]\label{conj:dense}
Suppose that $G_n$ is a sequence of connected graphs where $G_n$ has $n$ vertices and $\Omega(n^2)$ edges, and that $G_n$ is edge-transitive. Then $\liminf_{n \to \infty} \spp(G_n) > 0$.
\end{conj}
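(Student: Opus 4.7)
The plan is to deduce Conjecture~\ref{conj:dense} from the preliminary minimum-degree theorem announced in the abstract, namely that $\spp(G) \geq c(\alpha) > 0$ whenever $\delta(G) \geq \alpha \lvert V(G) \rvert$ for some fixed $\alpha > 0$. Once this ingredient is granted, edge-transitivity enters only through control of the degree sequence, and the conjecture becomes a short structural reduction.

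For that reduction I would invoke the classical dichotomy for edge-transitive graphs: every such graph is either vertex-transitive (hence regular) or bipartite and biregular, with the two sides coinciding with the vertex orbits of the automorphism group. In the regular case every degree equals $2|E(G_n)|/n = \Omega(n)$. In the bipartite biregular case, if the parts have sizes $a,b$ with $a+b=n$ and degrees $d_A, d_B$, then $a d_A = b d_B = |E(G_n)| = \Omega(n^2)$, so
\[
\min(d_A, d_B) \;=\; \frac{|E(G_n)|}{\max(a,b)} \;\geq\; \frac{|E(G_n)|}{n} \;=\; \Omega(n).
\]
In either situation $\delta(G_n) \geq \alpha n$ for some fixed $\alpha > 0$ and all sufficiently large $n$, and the minimum-degree theorem then yields $\liminf_{n \to \infty} \spp(G_n) \geq c(\alpha) > 0$, proving the conjecture.

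The main obstacle therefore is not the conjecture itself but the preliminary theorem on which the reduction rests: one must show that a linear minimum degree already forces $\spp(G)$ to stay bounded away from zero. A natural strategy is to compare $\st_n(G)$ with each $\st_k(G)$ for $k<n$ by an extension argument. The hypothesis $\delta(G)\geq \alpha n$ guarantees many candidate neighbours at every vertex of a partial subtree, so a given $k$-vertex subtree should admit roughly $(\alpha n - O(k))^{n-k}$ completions to a spanning tree, while each spanning tree carries only a limited number of order-$k$ subtrees. Turning this heuristic into a uniform estimate that dominates $\sum_{k<n}\st_k(G)$ and thereby keeps $\st_n(G)/\tot(G)$ bounded below independently of $n$ is where the real combinatorial work has to be performed; in particular one would need to control the combined effect of pruning from the leaves (which is easy) and from interior vertices (which changes the subtree structure more drastically), and to handle the edge cases $k$ close to $n$ where the counting estimates degrade.
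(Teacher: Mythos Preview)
Your reduction is exactly what the paper does: it proves Theorem~\ref{thm:lower} (the minimum-degree bound) and then remarks in one sentence that edge-transitive graphs are regular or bipartite biregular, so $\Omega(n^2)$ edges force $\delta(G_n)\ge \alpha n$ and the conjecture follows. Your sketch of the underlying double-counting argument for Theorem~\ref{thm:lower} is also on target, though the paper's execution is simpler than you anticipate: the ``spanning tree $\to$ subtree'' direction uses only the trivial bound $\binom{n}{k}$ on the number of $(n{-}k)$-vertex subtrees of a fixed spanning tree, so no separate analysis of interior-vertex pruning is needed.
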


In this paper, we prove a modified version of this conjecture: as it turns out, the assumption that the minimum degree is linear in the number of vertices guarantees $\spp(G)$ to be bounded below by a positive constant, without any additional symmetry condition. Specifically, we have the following theorem.

\begin{theo}\label{thm:lower}
Let $\alpha$ be a fixed positive constant. There exists a constant $c = c(\alpha)$ with the following property:
for every connected graph $G$ with $n$ vertices and minimum degree $\delta = \delta(G) \geq \alpha n$, we have
$$\spp(G) \geq e^{-1/\alpha - c/n}.$$
\end{theo}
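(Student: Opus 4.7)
The plan is to bound the ratio $\tot(G)/\st_n(G) = \sum_{k=1}^{n} \st_k(G)/\st_n(G)$ termwise. Setting $m = n - k$, for each subtree $S$ of $G$ of order $k$ let $E(S)$ denote the number of spanning trees of $G$ containing $S$ as a subgraph. Double-counting pairs $(S, T')$ with $S$ contained in a spanning tree $T'$ yields
\[
\sum_{\terms{V(S)} = k} E(S) = \sum_{T'} N_k(T'),
\]
where $N_k(T')$ is the number of order-$k$ subtrees of the tree $T'$ and the sum on the right runs over all spanning trees of $G$. Using the standard extremal bound $N_k(T') \leq \binom{n-1}{k-1}$ for $k \geq 2$ (achieved by the star), one obtains
\[
\frac{\st_k(G)}{\st_n(G)} \leq \frac{\binom{n-1}{m}}{\min_{\terms{V(S)} = k} E(S)}.
\]

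The heart of the argument is the lower bound $E(S) \geq (\alpha n)(\alpha n - 1) \cdots (\alpha n - m + 1)$, valid for every subtree $S$ of order $k$ whenever $m \leq \alpha n$. I would prove it by enumerating \emph{ordered extensions} of $S$: tuples $(\pi, w)$ with $\pi = (v_1, \dots, v_m)$ an ordering of $V(G) \setminus V(S)$ and $w_i \in N_G(v_i) \cap (V(S) \cup \{v_1, \dots, v_{i-1}\})$ for each $i$. Each such tuple builds a spanning tree extending $S$ by attaching $v_i$ to $w_i$ at step $i$, and each spanning tree extending $S$ arises from at most $m!$ such tuples, since once $\pi$ and the tree are fixed, $w_i$ is forced to be the parent of $v_i$ in the tree rooted at $V(S)$. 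Hence the number of ordered extensions is at most $m! \cdot E(S)$. On the other hand, the minimum-degree hypothesis gives, uniformly in $\pi$ and $i$,
\[
\terms{N_G(v_i) \cap (V(S) \cup \{v_1, \dots, v_{i-1}\})} \geq \deg_G(v_i) - (m - i) \geq \alpha n - m + i,
\]
because at most $m - i$ of the $\geq \alpha n$ neighbors of $v_i$ can lie in $\{v_{i+1}, \dots, v_m\}$. Multiplying over $i$ and summing over the $m!$ orderings $\pi$ gives the claimed lower bound.

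Combining these two bounds yields $\st_{n-m}(G)/\st_n(G) \leq \binom{n-1}{m}/[(\alpha n)(\alpha n - 1) \cdots (\alpha n - m + 1)]$ for $0 \leq m \leq \alpha n$. The proof is completed by summing over $m$; the ratio of consecutive terms,
\[
\frac{r_m}{r_{m-1}} = \frac{n - m}{m(\alpha n - m + 1)} = \frac{1}{m \alpha}\left(1 + \frac{(1-\alpha)m - 1}{\alpha n - m + 1}\right),
\]
equals $1/(m\alpha)$ times a multiplicative perturbation of the form $1 + O(m/n)$, so the partial sum matches $\sum_{m \geq 0} 1/(m!\alpha^m) = e^{1/\alpha}$ up to the desired $O(1/n)$ correction in the regime of small $m$, while for larger $m$ the factorial decay of $1/(m!\alpha^m)$ swamps any moderate perturbation. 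A separate crude argument is required for the tail $m > \alpha n$, where the ordered-extension lower bound degenerates, handled by combining $\st_{n-m}(G) \leq \binom{n}{m}(n-m)^{n-m-2}$ with a matrix-tree lower bound on $\st_n(G)$ coming from the minimum-degree hypothesis. The principal obstacle will be the careful bookkeeping needed to distill the explicit constant $c = c(\alpha)$ from these asymptotic estimates; the combinatorial lemma on $E(S)$ itself is conceptually clean and tight against the $K_n$ baseline.
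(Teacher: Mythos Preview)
Your main argument is essentially the paper's: both double-count pairs $(S,T)$ with $S\subset T$ a subtree and $|T|-|S|=m$, bounding this count from above via subtrees of a spanning tree and from below via extensions of an $(n-m)$-vertex subtree. Your ordered-extension lower bound $E(S)\ge(\alpha n)(\alpha n-1)\cdots(\alpha n-m+1)$ is a mild sharpening of the paper's simpler leaf-attachment bound $E(S)\ge(\delta-m)^m$ (the paper just connects each of the $m$ outside vertices directly to $S$), and your use of the star-extremal bound $N_{n-m}(T')\le\binom{n-1}{m}$ instead of the trivial $\binom{n}{m}$ is likewise a harmless refinement. Neither changes the analysis of the main range $m\le\alpha n$.

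The genuine divergence is in the tail $m>\alpha n$, and here your proposal has a gap. You invoke ``a matrix-tree lower bound on $\st_n(G)$ coming from the minimum-degree hypothesis'' without saying which one, and the obvious candidates are too weak. The greedy/ordered-extension argument applied from a single vertex gives only $\st_n(G)\ge(\lfloor\alpha n\rfloor)!$, which for $\alpha\le\tfrac12$ is of order $n^{\alpha n}$ and is swamped by your crude numerator $\sum_{j<(1-\alpha)n}\binom{n}{j}j^{j-2}$, whose dominant terms are of order $n^{(1-\alpha)n}$. Eigenvalue bounds via $\lambda_2$ fare no better in general: a graph with $\delta\approx n/2$ can have $\lambda_2=O(1/n)$ (take two cliques of size $n/2$ joined by a single edge). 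The paper sidesteps any direct lower bound on $\st_n(G)$: it uses the elementary inequality
\[
\st_1(G)+\st_2(G)+\cdots+\st_r(G)\le 2^r\,\st_r(G)
\]
(every subtree of order at most $r$ lies in some order-$r$ subtree, which in turn has at most $2^r$ subtrees) with $r=n-\lfloor\alpha n/2\rfloor$, and then feeds in the already-established ratio bound at $m=\lfloor\alpha n/2\rfloor$. That bootstrap is the missing idea in your tail treatment.
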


Since edge-transitive graphs are either regular or bipartite and biregular, Conjecture~\ref{conj:dense} is actually implied by this theorem. At the same time, Chin et al.~conjectured in~\cite{Chin2018Subtrees} that an edge density that goes to $0$ will always force $\spp(G)$ to go to $0$ as well. More precisely, they made the following conjecture.

\begin{conj}[Chin et al.~\cite{Chin2018Subtrees}]\label{conj:sparse}
Suppose that $G_n$ is a sequence of connected graphs where $G_n$ has $n$ vertices and $O(n^{\alpha})$ edges, where $\alpha < 2$ is fixed. Then $\lim_{n \to \infty} \spp(G_n) = 0$.
\end{conj}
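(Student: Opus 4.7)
The goal is to show $\spp(G_n) = \st_n(G_n)/\tot(G_n) \to 0$ when $G_n$ has $n$ vertices and $m = O(n^\alpha)$ edges with $\alpha < 2$. My plan is to bound $\st_n(G_n)$ from above and $\tot(G_n)$ from below, and then compare.

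For the upper bound, I would apply the matrix-tree theorem together with the AM-GM inequality on the $n-1$ non-zero Laplacian eigenvalues of $G_n$, whose sum equals $2m$:
\[
\st_n(G_n) \leq \frac{1}{n}\left(\frac{2m}{n-1}\right)^{n-1} = O\!\left(n^{(\alpha-1)(n-1) - 1}\right).
\]
Since $\alpha < 2$, this is strictly smaller than the Cayley bound $n^{n-2}$ by a factor growing exponentially in $n$.

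For the lower bound on $\tot(G_n)$, a natural first attempt is a leaf-pruning argument: given any spanning tree $T$, removing each leaf $v$ produces a spanning tree of $G_n - v$, i.e.\ a subtree of $G_n$ of order $n-1$. Double counting over (spanning tree, leaf)-pairs gives
\[
\sum_{v \in V(G_n)} d(v)\,\st_{n-1}(G_n - v) = \sum_{T} \ell(T) \geq 2\,\st_n(G_n),
\]
where $\ell(T)$ denotes the number of leaves of $T$. Combined with $\st_{n-1}(G_n) = \sum_v \st_{n-1}(G_n - v)$, this yields $\st_{n-1}(G_n) \geq 2\,\st_n(G_n)/\Delta$, with $\Delta$ the maximum degree of $G_n$; iteration to smaller sizes produces further contributions to $\tot(G_n)$.

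The main obstacle is that leaf-pruning alone is far too weak; it only shows $\tot(G_n)/\st_n(G_n) \geq 1 + O(1/\Delta)$. To overcome this, I would combine several sources of non-spanning subtrees: any vertex of degree $d$ contributes at least $2^d$ star-shaped subtrees (vertex plus any subset of its neighbors), so $\tot(G_n) \geq 2^{\Delta(G_n)}$; any spanning tree $T$ contributes $\tot(T) \geq \binom{n+1}{2}$ subtrees; and the $m - n + 1$ non-tree edges produce additional subtrees via cycle structure. A case analysis on $\Delta$ seems natural here: when $\Delta$ is very large, the star-subtree bound already beats $\st_n$; when $\Delta$ is small, the matrix-tree bound on $\st_n$ sharpens (towards $\Delta^{n-1}$) and leaf-pruning becomes more effective. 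The hardest regime is expected to be $\alpha$ close to $2$, where $\st_n$ may be nearly $n^{n-2}$ and none of the bounds above is individually decisive; a careful interpolation---perhaps via refined estimates on $\st_k(G_n)$ for intermediate $k$, obtained by applying the matrix-tree theorem to the contracted graphs $G_n/S$ for small subtrees $S$---will likely be essential.
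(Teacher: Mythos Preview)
Your proposal attempts to prove a statement that the paper explicitly shows to be \emph{false}. The paper does not give a proof of Conjecture~\ref{conj:sparse}; immediately after stating it, the author exhibits a counterexample: a path on $n$ vertices with a copy of $K_{\lfloor\sqrt{n}\rfloor}$ attached at each end. This graph has only $O(n)$ edges, yet $\spp(G_n)\to e^{-2/e}>0$, because almost all subtrees of $G_n$ contain vertices of both cliques and hence the entire connecting path. So any purported proof of the conjecture must fail, and the task is to locate where yours does.

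The failure is in the lower bound on $\tot(G_n)$. For the counterexample, $\st_n(G_n)$ is roughly $(\sqrt{n})^{2(\sqrt{n}-2)}\sim n^{\sqrt{n}}$, while your candidate lower bounds for $\tot(G_n)$ are all much smaller: the star bound gives $2^{\Delta}=2^{\sqrt{n}}$; a single spanning tree contributes only $\binom{n+1}{2}$ subtrees; and leaf-pruning yields $\tot/\st_n\ge 1+O(1/\Delta)$. None of these, nor any combination of them, can force $\tot(G_n)/\st_n(G_n)\to\infty$ here, and indeed this ratio tends to the constant $e^{2/e}$. Your matrix-tree upper bound $\st_n\le n^{-1}(2m/(n-1))^{n-1}$ is correct but useless in this regime: with $m=O(n)$ it only says $\st_n\le C^n$, which is far larger than the actual $n^{\sqrt{n}}$, so the gap you hoped to exploit between $\st_n$ and $\tot$ simply is not there. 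The ``hardest regime'' you anticipated near $\alpha=2$ is a red herring; the conjecture already fails at $\alpha=1$.
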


Unfortunately, this conjecture is false: consider a graph $G$ consisting of a path of length $n$ with a complete graph of order $\lfloor \sqrt{n} \rfloor$ attached to each end. The number of edges of such a graph is only linear in the number of vertices. However, it is not difficult to see that almost all subtrees have to contain vertices of both complete graphs, thus also the entire connecting path. Then it follows essentially from the aforementioned result on complete graphs that $\spp(G) \to e^{-2/e}$ as $n \to \infty$. The sizes of the two complete graphs can even be reduced further to $\lfloor n^{\epsilon} \rfloor$ vertices to obtain a graph with only $n + O(n^{2\epsilon})$ edges and $n + O(n^{\epsilon})$ vertices for every $\epsilon > 0$, still with the same limit.

\medskip

In this paper, we will show, however, that Conjecture~\ref{conj:sparse} is true for sparse \emph{random} graphs (Erd\H{o}s-R\'enyi graphs $G(n,p)$, where $p \to 0$), while on the other hand Conjecture~\ref{conj:dense} is true for dense random graphs (Erd\H{o}s-R\'enyi graphs $G(n,p)$, where $p \to p_{\infty} > 0$). The precise statement reads as follows.

\begin{theo}\label{thm:gnp}
Consider the Erd\H{o}s-R\'enyi random graph $G = G(n,p)$, where the probability $p$ is allowed to depend on the number of vertices $n$. As $n \to \infty$, we have
\begin{itemize}
\item $\spp(G) \overset{p}{\to} e^{-1/(ep_{\infty})}$ if $p \to p_{\infty} > 0$,
\item $\spp(G) \overset{p}{\to} 0$ if $p \to 0$.
\end{itemize}
\end{theo}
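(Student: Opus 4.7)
The plan is to analyse the subtree counts $\st_k(G)$ directly through their moments. By Cayley's formula there are $\binom{n}{k}k^{k-2}$ labelled trees on $k$-element subsets of $[n]$, each appearing in $G = G(n,p)$ with probability $p^{k-1}$ (its $k-1$ edges all being present), so
\[
\Ex[\st_k(G)] = \binom{n}{k}\, k^{k-2}\, p^{k-1}.
\]
For fixed $j \geq 0$, a Stirling expansion gives
\[
\frac{\Ex[\st_{n-j}(G)]}{\Ex[\st_n(G)]} \;=\; \binom{n}{j}\,\frac{(n-j)^{n-j-2}}{n^{n-2}}\,p^{-j} \;\longrightarrow\; \frac{1}{j!\,(e p_\infty)^j},
\]
and the identity $\sum_{j \geq 0} \tfrac{1}{j!(e p_\infty)^j} = e^{1/(e p_\infty)}$ reveals the source of the claimed limit $e^{-1/(e p_\infty)}$.

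\emph{Dense case $p \to p_\infty > 0$.} The task is to upgrade each mean asymptotic to convergence in probability. For each fixed $j$ I would use a second moment argument, classifying pairs $(T_1,T_2)$ of labelled trees on $(n-j)$-subsets by $|E(T_1)\cap E(T_2)|$: the edge-disjoint pairs already contribute $\Ex[\st_{n-j}(G)]^2$, and pairs with non-empty overlap contribute only $o(\Ex[\st_{n-j}(G)]^2)$ because $p$ is bounded below, so $\st_{n-j}(G)/\Ex[\st_{n-j}(G)] \overset{p}{\to} 1$ by Chebyshev. (Alternatively, decompose $\st_{n-j}(G) = \sum_{|S|=j}\tau(G\setminus S)$ and invoke concentration of the spanning-tree count via the matrix-tree theorem and Laplacian eigenvalue concentration in dense $G(n,p)$.) To pass from the per-$j$ convergence to $\tot(G)/\st_n(G) \overset{p}{\to} e^{1/(e p_\infty)}$, I would truncate the sum at $j \leq J$, apply the continuous mapping theorem to the finite sum, and bound the tail using Markov's inequality on $\sum_{j > J} \st_{n-j}(G)$ together with lower-tail concentration of $\st_n(G)$; convergence of the series then makes the tail arbitrarily small by taking $J$ large. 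The result $\spp(G) \overset{p}{\to} e^{-1/(e p_\infty)}$ follows.

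\emph{Sparse case $p \to 0$.} When $np$ stays bounded, $G$ is disconnected with high probability and $\spp(G) = 0$ whp trivially, so I may assume $np \to \infty$. In that regime I pick $k^\ast = n - \lfloor 1/(e p) \rfloor$, the integer asymptotically maximising $\Ex[\st_k(G)]$, and a Stirling calculation yields
\[
\log\frac{\Ex[\st_{k^\ast}(G)]}{\Ex[\st_n(G)]} \;=\; \frac{1}{e p}\bigl(1 + o(1)\bigr) \;\longrightarrow\; \infty.
\]
Markov's inequality gives $\st_n(G) \leq \omega\,\Ex[\st_n(G)]$ whp for any $\omega \to \infty$, while a second moment estimate (the same edge-overlap classification as in the dense case, now covering a wider range of overlap sizes because $p \to 0$) shows $\st_{k^\ast}(G) \geq \tfrac{1}{2}\Ex[\st_{k^\ast}(G)]$ whp. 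Combining yields $\spp(G) \leq \st_n(G)/\st_{k^\ast}(G) \overset{p}{\to} 0$.

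The main obstacle is the concentration step for $\st_k(G)$: this count is a sum of many highly correlated indicators (labelled trees sharing edges). In the dense regime the second moment is routine, but in the sparse regime near the connectivity threshold $p \sim \log n/n$ the second moment analysis is delicate since one must carefully control how common subforests of two labelled trees inflate their joint probability; I expect the bulk of the technical work to lie in these overlap estimates.
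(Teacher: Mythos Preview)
Your plan has a genuine gap in both regimes, and it is the same gap: the second moment method cannot establish $\st_{n-j}(G)/\Ex[\st_{n-j}(G)]\overset{p}{\to}1$, because this convergence is \emph{false}. For two independent uniformly random spanning trees $T_1,T_2$ of $K_n$, the edge overlap $|E(T_1)\cap E(T_2)|$ is asymptotically Poisson with mean $2$, so
\[
\frac{\Ex[\st_n(G)^2]}{\Ex[\st_n(G)]^2}=\Ex_{T_1,T_2}\bigl[p^{-|E(T_1)\cap E(T_2)|}\bigr]\;\longrightarrow\; e^{2(1-p_\infty)/p_\infty}>1
\]
whenever $p_\infty<1$. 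Thus $\operatorname{Var}(\st_n)/(\Ex\st_n)^2$ stays bounded away from $0$, Chebyshev gives nothing, and indeed Janson's theorem (Theorem~\ref{thm:janson}) shows that $\st_n(G)/\Ex[\st_n(G)]$ converges in distribution to a nondegenerate lognormal, not to $1$. The same phenomenon occurs for $\st_{n-j}$ with any fixed $j$. Your alternative via Laplacian eigenvalue concentration has the same defect: it pins down the \emph{typical} value of $\st_{n-j}$, which differs from the mean by a random multiplicative factor of order $1$, so you cannot conclude that the ratio $\st_{n-j}/\st_n$ equals the ratio of means.

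The paper avoids this by never comparing $\st_{n-k}(G)$ to its expectation. Instead it introduces the pair count $\prs_k(G)=\#\{(S,T):S\subset T,\ |S|=n-k,\ T\text{ spanning}\}$ and evaluates it two ways: counting extensions of $S$ gives $\prs_k\approx \st_{n-k}\cdot(pn)^k$, while counting large subtrees of each spanning tree $T$ (using that almost all spanning trees have $\sim n/e$ leaves, together with the bound $\binom{\ell}{k}\le\st_{n-k}(T)\le\binom{\ell+k-1}{k}$) gives $\prs_k\approx \st_n\cdot\frac{n^k}{e^kk!}$. Equating these yields $\st_{n-k}/\st_n\to\frac{1}{k!(ep_\infty)^k}$ directly, with the non-concentration cancelling out. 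In the sparse case the difficulty is worse than you anticipate: for $p\ll n^{-1/2}$ no distributional result like Janson's is available, and the variance-to-mean-squared ratio blows up like $e^{2/p}$, so the paper abandons moment methods entirely and instead proves a constructive lower bound $\tot(G)\ge p^nn!e^{an}$ by building many subtrees greedily and controlling overcounting via the hook-length formula for linear extensions.
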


As a side result of our considerations, we also obtain information on the distribution of the number of subtrees in dense Erd\H{o}s-R\'enyi graphs, i.e., in the case where $p$ is constant or converges to a positive constant $p_{\infty}$. This is based on the following distributional result due to Janson~\cite{Janson1994numbers} for the number of spanning trees in $G(n,p)$, which we will also use for the purposes of our proof.

\begin{theo}[Janson~\cite{Janson1994numbers}]\label{thm:janson}
Assume that $p \to p_{\infty} \in [0,1)$ and that $\liminf_{n \to \infty} \sqrt{n}p > 0$. Then we have, as $n \to \infty$,
$$p^{1/2} \Big( \log \st_n(G(n,p)) - \log (n^{n-2}p^{n-1}) + \frac{1-p}{p} \Big) \overset{d}{\to} N(0,2(1-p_{\infty})).$$
On the other hand, if $p \to 1$ and $n^2(1-p) \to \infty$, then we have, as $n \to \infty$,
$$\frac{1}{\sqrt{2(1-p)}} \Big( \frac{\st_n(G(n,p))}{\mathbb{E} (\st_n(G(n,p)) )} - 1 \Big) \overset{d}{\to} N(0,1).$$
\end{theo}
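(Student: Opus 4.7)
The plan is to apply Kirchhoff's matrix-tree theorem, which expresses $\st_n(G)$ as $\tfrac{1}{n}\prod_{i=1}^{n-1}\lambda_i$, where $\lambda_i$ are the nonzero eigenvalues of the Laplacian $L = D - A$ of $G = G(n,p)$. Decompose $L = \Ex[L] + E$, noting that $\Ex[L] = npI - pJ$ (with $J$ the all-ones matrix) has eigenvalue $np$ on $\mathbf{1}^\perp$ and $0$ on $\mathrm{span}(\mathbf{1})$, while $E$ is a centered random matrix satisfying $E\mathbf{1} = 0$. Letting $\mu_1,\ldots,\mu_{n-1}$ denote the eigenvalues of $E$ restricted to $\mathbf{1}^\perp$, we obtain
$$\log \st_n(G) - \log(n^{n-2}p^{n-1}) = \sum_{i=1}^{n-1}\log\Bigl(1 + \frac{\mu_i}{np}\Bigr).$$

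For the main regime $p \to p_\infty \in [0, 1)$ with $\sqrt{n}\,p \to \infty$, I would first establish $\|E\|_{\mathrm{op}} = O(\sqrt{np})$ with high probability, via a matrix Bernstein inequality applied to $E$ written as a sum of independent edge contributions. Then each $\mu_i/(np) = O((np)^{-1/2}) = o(1)$, so the logarithm can be Taylor expanded termwise. The linear term $\mathrm{tr}(E)/(np) = \tfrac{2}{np}\sum_{i<j}(A_{ij}-p)$ is a normalized sum of $\binom{n}{2}$ independent bounded centered Bernoulli variables, and the classical CLT gives it asymptotic law $N\bigl(0,\,2(n-1)(1-p)/(np)\bigr)$, which after multiplication by $\sqrt{p}$ yields the target variance $2(1-p_\infty)$. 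The quadratic term $-\tfrac{1}{2(np)^2}\mathrm{tr}(E^2)$ has mean $-(n-1)(1-p)/(np) = -\tfrac{1-p}{p} + O((np)^{-1})$, which accounts for the deterministic shift $\tfrac{1-p}{p}$ in the theorem; a direct second-moment computation (using independence of the off-diagonal entries) shows that its fluctuations around this mean are of order $o(p^{-1/2})$, hence vanish after normalisation. Higher-order terms ($k\ge 3$) are uniformly dominated by $n\|E\|_{\mathrm{op}}^k/(np)^k = O((np)^{1-k/2})$, which is $o(1)$ and summable.

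For $p \to 1$ with $n^2(1-p) \to \infty$, the Taylor approach above fails because $\|E\|/(np)$ is no longer small; I would instead expand $\st_n(G) = \sum_T \prod_{e\in T}(1 - Y_e)$, where $T$ ranges over spanning trees of $K_n$ and $Y_e$ is the indicator that edge $e$ is missing from $G$. Since each tree has only $n-1$ edges and $\Ex[Y_e] = 1-p$ is small, the linearisation $\prod_{e\in T}(1 - Y_e) \approx 1 - \sum_{e\in T} Y_e$ is accurate in mean square. Using that each edge of $K_n$ lies in exactly $2n^{n-3}$ spanning trees of $K_n$, this yields $\log(\st_n(G)/\Ex[\st_n(G)]) \approx -\tfrac{2}{n}(N - \Ex N)$, where $N \sim \mathrm{Binomial}(\binom{n}{2}, 1-p)$ is the number of missing edges; a CLT for $N$ then delivers the claimed $N(0,1)$ limit after normalisation by $\sqrt{2(1-p)}$. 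The hard part in both regimes is the quantitative control of the higher-order corrections: in the sparse regime one needs the operator-norm bound $\|E\|_{\mathrm{op}} = O(\sqrt{np})$ to hold \emph{uniformly} down to $p = \omega(n^{-1/2})$, and in the dense regime one must verify that spanning trees using two or more missing edges contribute only negligibly to the variance of the linearisation error.
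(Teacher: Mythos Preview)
This theorem is not proved in the present paper: it is quoted from Janson's work \cite{Janson1994numbers} and used as a black box (in the proof of Lemma~\ref{lem:four_statements}(3), and again in deriving the log-normal corollary in the final section). There is therefore no ``paper's own proof'' to compare your proposal against.

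For what it is worth, your sketch is essentially the strategy Janson himself uses in \cite{Janson1994numbers}: express $\st_n(G)$ by the matrix--tree theorem as $\tfrac{1}{n}\prod_i \lambda_i$, write the Laplacian as $npI-pJ$ plus a centred perturbation $E$, and Taylor-expand $\sum_i\log(1+\mu_i/(np))$ in traces of powers of $E$, with the linear trace providing the Gaussian fluctuation and the quadratic trace producing the deterministic shift $(1-p)/p$. One minor inaccuracy in your write-up: in the regime $p\to 1$, the eigenvalue expansion does \emph{not} break down. On the contrary, $\|E\|_{\mathrm{op}}/(np)$ is of order $\sqrt{(1-p)/n}$ and hence even smaller than in the first regime; Janson handles both cases by the same spectral expansion. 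The change of normalisation arises simply because the variance $\sim 2(1-p)/p$ of the leading term tends to $0$, so one must rescale to see a non-degenerate limit. Your alternative combinatorial linearisation over spanning trees of $K_n$ would also work for $p\to1$, but it is not how \cite{Janson1994numbers} proceeds.
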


Theorem~\ref{thm:lower} will be proven in the following section. The double-counting argument that is used will be important for our treatment of random graphs as well. The dense case of Theorem~\ref{thm:gnp} will be treated in Section~\ref{sec:dense}, the sparse case in Section~\ref{sec:sparse}. In the sparse case, we even obtain a somewhat stronger result than stated, which puts an explicit upper bound on $\spp(G(n,p))$ that holds with high probability. Some corollaries of our main results will be discussed in the concluding section.

\section{Graphs with large minimum degree}\label{sec:mindeg}

This section is concerned with the proof of our first main result, Theorem~\ref{thm:lower}. The proof technique, however, will also be important for the proofs of our other results. As it turns out, the condition that the minimum degree is ``large'', i.e., linear in the number of vertices, already suffices to obtain a lower bound on the probability $\spp(G)$.

\begin{proof}[Proof of Theorem~\ref{thm:lower}]
We use a double-counting argument to prove the statement of the theorem: for every positive integer $k$, we count the number of pairs $(S,T)$ consisting of a subtree $S$ of $G$ with $n-k$ vertices and a spanning tree $T$ of $G$ such that $S$ is a subtree of $T$. Let this number be denoted by $\prs_k(G)$.
\begin{itemize}
\item On the one hand, for every spanning tree $T$ of $G$, the number of possible subtrees $S$ to form a pair with $T$ that satisfies the required conditions is clearly at most $\binom{n}{k}$ (which is the number of ways to choose $n-k$ vertices). Therefore,
$$\prs_k(G) \leq \binom{n}{k} \st_n(G) \leq \frac{n^k}{k!} \st_n(G).$$
\item On the other hand, we can construct a feasible pair $(S,T)$ starting from the subtree $S$. For each of the remaining $k$ vertices, we choose one of the edges that connects it with one of the vertices of $S$. Adding all these edges to $S$, we obtain a tree $T$; note that all the vertices of $T$ that do not belong to $S$ are leaves by this construction. There are at least $\delta - k$ edges to choose for each of the $k$ vertices, so it follows that
$$\prs_k(G) \geq (\delta - k)^k \st_{n-k}(G)$$
for every $k \leq \delta$.
\end{itemize}
Combining the two inequalities, we find that
$$(\delta - k)^k \st_{n-k}(G) \leq \prs_k(G) \leq \frac{n^k}{k!} \st_n(G)$$
and thus
\begin{equation}\label{eq:firstineq}
\frac{\st_{n-k}(G)}{\st_n(G)} \leq \frac{n^k}{(\delta - k)^k k!} \leq \frac{n^k}{(\alpha n - k)^k k!} = \frac{1}{\alpha^k k!} \Big( 1 - \frac{k}{\alpha n} \Big)^{-k}
\end{equation}
for every $k < \alpha n \leq \delta$.

We need one more simple inequality for our purposes: clearly, every subtree of $G$ with at most $r$ vertices is contained in a subtree with exactly $r$ vertices, since $G$ is connected (which allows us to add vertices one by one until we reach a tree with exactly $r$ vertices). On the other hand, an $r$-vertex subtree contains at most $2^r$ smaller subtrees, hence
\begin{equation}\label{eq:secondineq}
\st_1(G) + \st_2(G) + \cdots + \st_r(G) \leq 2^r \st_r(G).
\end{equation}

Now we split the sum
$$\frac{1}{P(G)} = \frac{\st_1(G) + \st_2(G) + \cdots + \st_n(G)}{\st_n(G)} = \sum_{k=0}^{n-1} \frac{\st_{n-k}(G)}{\st_n(G)}$$
into three parts that are estimated separately. It clearly suffices to prove the desired inequality for sufficiently large $n$.
\begin{itemize}
\item For $k \leq (\alpha n)^{1/6}$ (the specific choice of exponent is irrelevant, provided it is less than $\frac12$; we choose $\frac16$ for later use), we use~\eqref{eq:firstineq}. Observe that
$$\Big( 1 - \frac{k}{\alpha n} \Big)^{-k} = \exp \Big(\negthickspace - k \log \Big( 1 - \frac{k}{\alpha n} \Big) \Big) = \exp \Big( \frac{k^2}{\alpha n} + O \Big( \frac{k^3}{n^2} \Big) \Big) = 1 + O \Big( \frac{k^2}{n} \Big)$$
holds under our assumption on $k$. It follows that
$$\sum_{0 \leq k \leq (\alpha n)^{1/6}} \frac{\st_{n-k}(G)}{\st_n(G)} \leq \sum_{0 \leq k \leq (\alpha n)^{1/6}} \frac{1}{\alpha^k k!} \Big( 1 + \frac{\kappa k^2}{n} \Big)$$
for some constant $\kappa > 0$ (that depends on $\alpha$), and we find that
$$\sum_{0 \leq k \leq (\alpha n)^{1/6}} \frac{\st_{n-k}(G)}{\st_n(G)} \leq \sum_{k \geq 0} \frac{1}{\alpha^k k!} \Big( 1 + \frac{\kappa k^2}{n} \Big) = e^{1/\alpha} \Big( 1 + \frac{\kappa(1+\alpha)}{\alpha^2 n} \Big).$$
\item Next, we consider the case that $(\alpha n)^{1/6} < k \leq \frac{\alpha n}{2}$. In this case, the estimate~\eqref{eq:firstineq} yields
$$\frac{\st_{n-k}(G)}{\st_n(G)} \leq \frac{2^k}{\alpha^k k!}.$$
For large enough $n$, this gives us
\begin{align*}
\sum_{(\alpha n)^{1/6} < k \leq \alpha n/2} \frac{\st_{n-k}(G)}{\st_n(G)} &\leq \sum_{k > (\alpha n)^{1/6}} \frac{2^k}{\alpha^k k!} \leq \frac{2^{\lceil (\alpha n)^{1/6} \rceil}}{\alpha^{\lceil (\alpha n)^{1/6} \rceil} (\lceil (\alpha n)^{1/6} \rceil)!} \sum_{\ell \geq 0} \frac{2^\ell}{\alpha^\ell (\alpha n)^{\ell/6}} \\
&= \frac{2^{\lceil (\alpha n)^{1/6} \rceil}}{\alpha^{\lceil (\alpha n)^{1/6} \rceil} (\lceil (\alpha n)^{1/6} \rceil)!} \Big( 1 - \frac{2}{\alpha^{7/6} n^{1/6}} \Big)^{-1},
\end{align*}
which goes to $0$ faster than any power of $n$ thanks to the factorial in the denominator.
\item Finally, we consider the case that $k > \frac{\alpha n}{2}$. Here, we combine~\eqref{eq:firstineq} and~\eqref{eq:secondineq} to obtain
$$\sum_{k > \alpha n/2} \frac{\st_{n-k}(G)}{\st_n(G)} \leq \frac{\st_{n-\lfloor \alpha n/2 \rfloor}(G)2^{n-\lfloor \alpha n/2 \rfloor}}{\st_n(G)} \leq \frac{2^{\lfloor \alpha n/2 \rfloor}2^{n-\lfloor \alpha n/2 \rfloor}}{\alpha^{\lfloor \alpha n/2 \rfloor} (\lfloor \alpha n/2 \rfloor)!} = \frac{2^n}{\alpha^{\lfloor \alpha n/2 \rfloor} (\lfloor \alpha n/2 \rfloor)!},$$
which also goes to $0$ faster than any power of $n$.
\end{itemize}
Combining the three cases, we arrive at
$$\frac{1}{\spp(G)} = \sum_{k=0}^{n-1} \frac{\st_{n-k}(G)}{\st_n(G)} \leq e^{1/\alpha} \Big( 1 + O \Big( \frac{1}{n} \Big) \Big) = e^{1/\alpha + O(1/n)},$$
and the statement of the theorem follows.
\end{proof}

\section{Dense random graphs}\label{sec:dense}

Next we consider random Erd\H{o}s-R\'enyi graphs $G(n,p)$ where $p$ is constant or converges to a positive constant $p_{\infty}$. The argument of the previous section can be modified to obtain more than just a lower bound in this case. We can exploit the fact that random graphs $G(n,p)$, where $p$ is of constant order, are ``almost regular'' to prove that $\spp(G(n,p))$ converges in probability to a constant, giving us the first half of Theorem~\ref{thm:gnp}. Let us first repeat the formal statement.

\begin{theo}\label{thm:dense}
Suppose that $p \to p_{\infty} > 0$ as $n \to \infty$. We have
$$\spp(G(n,p)) \overset{p}{\to} e^{-1/(e p_{\infty})}$$
as $n \to \infty$.
\end{theo}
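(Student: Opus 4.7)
The plan is to establish
\[
\sum_{k=0}^{n-1} \frac{\st_{n-k}(G)}{\st_n(G)} \overset{p}{\to} e^{1/(ep_\infty)} = \sum_{k \geq 0} \frac{1}{k!(ep_\infty)^k},
\]
which is equivalent to the theorem via $\spp(G) = 1/\sum_k \st_{n-k}/\st_n$. The decomposition mirrors the three-range split used for Theorem~\ref{thm:lower}, except that the small-$k$ range will be treated via concentration rather than the crude double-counting bound---this is precisely what turns the factor $1/\alpha$ of Theorem~\ref{thm:lower} into the sharper $1/(ep_\infty)$. The starting point is an exact expected-value calculation: by Cayley's formula and linearity over the $\binom{n}{n-k}$ vertex subsets of size $n-k$, one has $\mathbb{E}[\st_{n-k}(G(n,p))] = \binom{n}{k}(n-k)^{n-k-2}p^{n-k-1}$, and combining this with $\mathbb{E}[\st_n] = n^{n-2}p^{n-1}$ and the estimate $(1-k/n)^{n-k-2}\to e^{-k}$ yields $\mathbb{E}[\st_{n-k}]/\mathbb{E}[\st_n]\to 1/(k!(ep_\infty)^k)$ for each fixed $k$. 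The extra $e^{-k}$ appearing here is exactly the factor that the bound of Theorem~\ref{thm:lower} misses.

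For each fixed $k$ I would then promote this to $\st_{n-k}/\st_n \overset{p}{\to} 1/(k!(ep_\infty)^k)$. Theorem~\ref{thm:janson} already provides $\st_n/\mathbb{E}[\st_n]\overset{p}{\to} e^{-(1-p_\infty)/p_\infty}$. To obtain the analogous statement $\st_{n-k}/\mathbb{E}[\st_{n-k}] \overset{p}{\to} e^{-(1-p_\infty)/p_\infty}$, I would write $\st_{n-k}(G) = \sum_{|U|=n-k} t(G[U])$, observe that each summand is the spanning-tree count of a copy of $G(n-k,p)$, and apply Theorem~\ref{thm:janson} to each term. Since every summand concentrates about the same multiplicative factor $e^{-(1-p)/p}$, the sum does as well, and this factor then cancels between numerator and denominator in the ratio.

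For the tail I would invoke Theorem~\ref{thm:lower}: since $\delta(G(n,p)) \geq (p_\infty - \varepsilon)n$ with high probability for any fixed $\varepsilon > 0$, inequality~\eqref{eq:firstineq} (with the remaining two ranges handled as in Section~\ref{sec:mindeg}) shows that $\sum_{k\geq k^*}\st_{n-k}/\st_n$ is, with high probability, dominated by the tail of the convergent series $\sum_k 1/(k!(p_\infty-\varepsilon)^k) = e^{1/(p_\infty-\varepsilon)}$, which can be made arbitrarily small. A standard $\varepsilon$--$k^*$ argument then combines termwise convergence of the head with negligibility of the tail to yield convergence in probability to $e^{1/(ep_\infty)}$. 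The main technical obstacle is the concentration of $\st_{n-k}$: Janson's theorem applies to a single $G(m,p)$, but $\st_{n-k}$ averages $\binom{n}{k}$ tree counts $t(G[U])$ over strongly overlapping subgraphs, so controlling their correlated sum --- ideally uniformly for $k$ up to a slowly growing cutoff --- requires either a tailored second-moment computation or an adaptation of the log-determinant/CLT machinery behind Janson's proof.
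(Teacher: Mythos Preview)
Your tail treatment (invoking the minimum-degree bound and the estimates from Section~\ref{sec:mindeg}) matches the paper exactly. The divergence is in the small-$k$ range, and there the paper takes a route that sidesteps precisely the obstacle you flag at the end.

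Rather than concentrating $\st_{n-k}(G)$ directly, the paper evaluates the double-counting quantity $\prs_k(G)$ from both sides. From the $\st_{n-k}$ side, one counts extensions of an $(n-k)$-vertex subtree $S$ to a spanning tree: using only that $\delta(G)$ and $\Delta(G)$ are both $pn(1+o(1))$ with high probability, this gives $\prs_k(G) = \st_{n-k}(G)\,p^k n^k\,(1+o(1))$ uniformly for $k \leq n^{1/6}$ (Lemma~\ref{lem:Pk1}). From the $\st_n$ side, one counts $(n-k)$-vertex subtrees of a fixed spanning tree $T$: the key input is that this count is controlled by the number $\ell$ of leaves of $T$ via $\binom{\ell}{k} \leq \st_{n-k}(T) \leq \binom{\ell+k-1}{k}$ (Lemma~\ref{lem:est_by_leaves}), and since with high probability all but a negligible fraction of spanning trees of $G$ have $\ell = n/e + O(n^{2/3})$ leaves (statement~(4) of Lemma~\ref{lem:four_statements}), one gets $\prs_k(G) = \st_n(G)\,\frac{n^k}{e^k k!}(1+o(1))$ (Lemma~\ref{lem:Pk2}). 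Equating the two expressions yields $\st_{n-k}(G)/\st_n(G) = (ep)^{-k}/k!\,(1+o(1))$ directly. The factor $e^{-k}$ you correctly identify as ``what Theorem~\ref{thm:lower} misses'' enters through the leaf count $n/e$, not through the $(1-k/n)^{n-k}$ in your expected-value ratio.

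Your route is plausible in principle, but the gap you name is real and not merely cosmetic. Janson's theorem gives $t(G[U])/\Ex[t(G[U])] \overset{p}{\to} c := e^{-(1-p_\infty)/p_\infty}$ for each fixed $U$, but these variables are \emph{not} uniformly integrable (they are nonnegative with mean~$1$ yet converge to $c<1$), so one cannot pass from termwise convergence to convergence of the average by an $L^1$/Markov argument. Since the $t(G[U])$ for overlapping $U$ are strongly positively correlated, the rare large-deviation events also overlap, and establishing $\st_{n-k}/\Ex[\st_{n-k}] \overset{p}{\to} c$ would genuinely require a tailored second-moment computation or an adaptation of Janson's log-determinant argument to the sum. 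What the paper's $\prs_k$ device buys is exactly the avoidance of this step: Janson's theorem is invoked only once, for $\st_n$ itself, and even then only through the crude consequence $\st_n \geq n^{n-2}p^{n-1}e^{-n^{1/6}}$ with high probability (statement~(3) of Lemma~\ref{lem:four_statements}), which serves merely to certify that spanning trees with atypical leaf counts are negligible.
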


For the proof, we will use a modification of inequality~\eqref{eq:firstineq}. It is based on the same double-counting strategy, but the derivation is somewhat more involved. It will therefore be presented as a sequence of lemmas. Let us first mention two classical tools that will be used repeatedly in the following: first, \emph{Markov's inequality} on nonnegative random variables, see for instance~\cite[Chapter 3, Theorem 1.1]{Gut2013Probability} or \cite[Eq.~(1.3)]{Janson2000Random}.

\begin{lemma}[Markov's inequality]
For every nonnegative random variable $X$ and every positive real number $a$, we have $\Prob(X \geq a) \leq \frac{\Ex(X)}{a}$.
\end{lemma}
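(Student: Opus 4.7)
The plan is to argue via the standard indicator-function bound. First I would introduce the random variable $Y = a \cdot \mathbf{1}_{\{X \geq a\}}$, which equals $a$ on the event $\{X \geq a\}$ and $0$ on its complement. The key observation is the pointwise inequality $X \geq Y$: on $\{X \geq a\}$ this is immediate since $X \geq a = Y$, while on $\{X < a\}$ one has $Y = 0 \leq X$, where the second inequality is precisely where the hypothesis that $X$ is nonnegative enters.

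From here the conclusion is one line. Taking expectations preserves the inequality, so
$$\Ex(X) \;\geq\; \Ex(Y) \;=\; a \cdot \Prob(X \geq a),$$
and dividing through by the positive constant $a$ yields the desired bound $\Prob(X \geq a) \leq \Ex(X)/a$.

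There is no real obstacle: the only detail worth flagging is that nonnegativity of $X$ is essential on $\{X < a\}$, without which the inequality $X \geq Y$ could fail and the whole argument would collapse. Since the lemma is invoked merely as a classical tool (with citations to Gut and to Janson, \L uczak and Ruci\'nski already supplied in the text), I would present the proof in the compact two-step form above rather than spell out any measure-theoretic subtleties.
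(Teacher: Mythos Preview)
Your proof is correct and is the standard indicator-function argument for Markov's inequality. Note, however, that the paper does not actually prove this lemma: it is stated as a classical tool with references to \cite{Gut2013Probability} and \cite{Janson2000Random}, so there is no in-paper proof to compare against.
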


There are various variants of the so-called \emph{Chernoff bounds} for bounding the tails of sums of independent random variables, in particular the tails of the binomial distribution. We will use the following version~\cite[Theorem 2.1]{Janson2000Random}:

\begin{lemma}[Chernoff bounds]\label{lem:chernoff}
Let $X$ be a random variable that follows a $\operatorname{Bin}(n,p)$ distribution, and write $\mu = \Ex(X) = np$. For every $t > 0$, we have
$$\Prob \big(X \leq \mu - t\big) \leq \exp \Big( \negthickspace -\frac{t^2}{2 \mu} \Big)$$
and
$$\Prob \big(X \geq \mu + t\big) \leq \exp \Big( \negthickspace -\frac{t^2}{2(\mu+t/3)} \Big).$$
\end{lemma}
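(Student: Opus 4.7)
The plan is to prove both bounds by the classical Cram\'er--Chernoff exponential moment method: apply Markov's inequality to an exponentiated version of $X$, optimise over a free parameter, and then reduce to two elementary calculus inequalities.

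Since $X$ is a sum of $n$ independent Bernoulli$(p)$ variables, its moment generating function factors as $\Ex(e^{\lambda X}) = (1-p+pe^{\lambda})^n$ for every real $\lambda$. Applying $\log(1+u) \leq u$ with $u = p(e^{\lambda}-1)$ gives
$$\Ex(e^{\lambda X}) \leq \exp\bigl(\mu(e^{\lambda}-1)\bigr).$$
Using Markov's inequality (in the form of the previous lemma) on $e^{\lambda X}$ for $\lambda > 0$ and on $e^{-\lambda X}$ for $\lambda > 0$ respectively, I would deduce
$$\Prob(X \geq \mu + t) \leq \exp\bigl(\mu(e^{\lambda}-1) - \lambda(\mu+t)\bigr) \quad \text{and} \quad \Prob(X \leq \mu - t) \leq \exp\bigl(\mu(e^{-\lambda}-1) + \lambda(\mu-t)\bigr).$$

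For the upper tail, I would choose $\lambda = \log(1+t/\mu)$, the unique minimiser of the exponent on the right; the bound then takes the form $\exp\bigl(-\mu\, h(t/\mu)\bigr)$ with $h(x) = (1+x)\log(1+x) - x$. For the lower tail, assuming $t < \mu$ (the bound being trivial otherwise since $X \geq 0$), I would symmetrically take $\lambda = -\log(1-t/\mu)$ and obtain $\exp\bigl(-\mu\, g(t/\mu)\bigr)$ with $g(y) = y + (1-y)\log(1-y)$.

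What remains is to compare these exponents with the targets $t^2/(2(\mu+t/3))$ and $t^2/(2\mu)$, which amounts to the two elementary inequalities
$$h(x) \geq \frac{x^2}{2(1+x/3)} \text{ for } x \geq 0, \qquad g(y) \geq \frac{y^2}{2} \text{ for } 0 \leq y < 1.$$
This calculus step is the main technical obstacle. The second inequality follows quickly from the integral form of Taylor's theorem: since $g(0) = g'(0) = 0$ and $g''(s) = 1/(1-s) \geq 1$, we get $g(y) = \int_0^y (y-s) g''(s)\, ds \geq y^2/2$. The first is more delicate; a clean route is to clear denominators, set $\psi(x) = 2(1+x/3)h(x) - x^2$, verify $\psi(0) = \psi'(0) = \psi''(0) = 0$, and check by direct differentiation that $\psi'''(x) \geq 0$ on $[0,\infty)$, which forces $\psi \geq 0$. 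With these two inequalities in hand, the stated tail bounds follow immediately by substitution.
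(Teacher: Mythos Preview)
Your proof is correct and follows the standard Cram\'er--Chernoff route; the calculus verifications for $g$ and $h$ are clean, and the boundary case $t \geq \mu$ for the lower tail is handled (for $t = \mu$ one has $\Prob(X=0) = (1-p)^n \leq e^{-np} \leq e^{-np/2}$, and for $t>\mu$ the probability vanishes).

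As for comparison with the paper: there is nothing to compare against. The paper does not prove this lemma at all; it simply quotes the two inequalities as \cite[Theorem~2.1]{Janson2000Random} and moves on. So your write-up is not an alternative to the paper's argument but rather a self-contained justification of a result the paper treats as a black box. If you intend to include it, it is fine as written; if you are matching the paper's level of detail, a one-line citation would suffice.
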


Next, we collect some properties of $G(n,p)$ that hold with high probability, i.e., with probability tending to $1$.

\begin{lemma}\label{lem:four_statements}
Suppose that $p \to p_{\infty} > 0$ as $n \to \infty$. Each of the following events occurs with high probability as $n \to \infty$:
\begin{enumerate}
\item The maximum degree $\Delta(G(n,p))$ is bounded above by $pn + n^{2/3}$.
\item The minimum degree $\delta(G(n,p))$ is bounded below by $pn - n^{2/3}$.
\item The number of spanning trees $\st_n(G(n,p))$ is bounded below by $n^{n-2}p^{n-1} e^{-n^{1/6}}$.
\item The number of spanning trees whose number of leaves does not lie in the interval $[\frac{n}{e} - n^{2/3},\frac{n}{e} + n^{2/3}]$ is bounded above by $n^{n-2}p^{n-1}e^{-n^{1/4}}$.
\end{enumerate}
\end{lemma}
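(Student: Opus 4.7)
The plan for (1) and (2) is a direct Chernoff estimate: the degree of each fixed vertex is $\operatorname{Bin}(n-1,p)$ with mean $\mu = (n-1)p$, so taking $t = n^{2/3}$ in Lemma~\ref{lem:chernoff} gives a tail probability of order $\exp(-\Theta(n^{1/3}))$ on each side (recall $\mu = \Theta(n)$). A union bound over all $n$ vertices then forces every degree simultaneously into the interval $[pn - n^{2/3}, pn + n^{2/3}]$ with probability $1 - n\exp(-\Theta(n^{1/3})) = 1 - o(1)$.

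For (3), I would invoke Janson's Theorem~\ref{thm:janson} directly. Since $p \to p_\infty > 0$, the condition $\liminf \sqrt{n}p > 0$ is satisfied, so
$$p^{1/2}\Big(\log \st_n(G(n,p)) - \log(n^{n-2}p^{n-1}) + \tfrac{1-p}{p}\Big)$$
converges in distribution and is therefore tight. Since $p^{1/2} = \Theta(1)$ and $(1-p)/p = \Theta(1)$, this implies that $\log \st_n(G(n,p)) - \log(n^{n-2}p^{n-1})$ is bounded in probability, and in particular exceeds $-n^{1/6}$ with probability tending to $1$, which is exactly (3).

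Part (4) is the substantive step and I would handle it by first moments. Let $L(n,k)$ denote the number of labeled trees on $[n]$ with exactly $k$ leaves. Each such tree appears as a spanning tree of $G(n,p)$ with probability $p^{n-1}$, so the expected number of spanning trees with leaf count outside $I_n := [\tfrac{n}{e} - n^{2/3}, \tfrac{n}{e} + n^{2/3}]$ equals $p^{n-1}\sum_{k \notin I_n} L(n,k) = n^{n-2}p^{n-1}\cdot\Prob(\Lambda_n \notin I_n)$, where $\Lambda_n$ is the number of leaves in a uniformly random labeled tree on $[n]$. Via Pr\"ufer's bijection, $\Lambda_n$ equals the number of elements of $[n]$ missing from a uniformly random length-$(n-2)$ sequence, with mean $n(1-1/n)^{n-2} = n/e + O(1)$ and variance $\Theta(n)$. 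The indicators $\mathbf{1}[v \text{ absent}]$ are negatively associated, so a standard Chernoff/Hoeffding bound (or Azuma applied to the Doob martingale generated by the Pr\"ufer sequence) yields $\Prob(\Lambda_n \notin I_n) \leq \exp(-cn^{1/3})$ for some absolute $c > 0$. Markov's inequality then gives the desired bound $n^{n-2}p^{n-1}e^{-n^{1/4}}$ with failure probability at most $\exp(-cn^{1/3} + n^{1/4}) = o(1)$.

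The only genuinely delicate point is the tail bound for $\Lambda_n$: one needs a sub-Gaussian estimate $n^{1/6}$ standard deviations from the mean, and this is exactly what the negative-association machinery (or Azuma on the Pr\"ufer sequence) supplies. Everything else reduces to Chernoff, Markov, and a transcription of Janson's theorem.
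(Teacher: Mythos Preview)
Your treatment of (1), (2), and (3) matches the paper's almost verbatim: Chernoff plus a union bound for the degree statements, and a direct reading of Janson's distributional result for (3). One small gap: your invocation of the first case of Theorem~\ref{thm:janson} requires $p_\infty \in [0,1)$, whereas the lemma allows $p_\infty = 1$. The paper patches this by observing that once $(1-p)n^2 \to \infty$ fails, one can compare with $G(n, 1 - n^{-3/2})$ and use monotonicity of the spanning-tree count in the edge set; you should add a sentence to cover this boundary case.

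For (4) your route genuinely differs from the paper's. The paper obtains the sub-Gaussian tail for the leaf count by citing analytic-combinatorics machinery (the quasi-power framework in Drmota's book, specifically the exponential tail inequality~(2.26) of his Theorem~2.22). Your argument via the Pr\"ufer correspondence---identifying $\Lambda_n$ with the number of empty bins when $n-2$ balls are thrown into $n$ bins, and then applying either negative association plus Hoeffding, or Azuma on the coordinate-exposure martingale---is more elementary and entirely self-contained, and delivers the same $\exp(-cn^{1/3})$ bound. (The bounded-differences constant is indeed~$1$: flipping a single Pr\"ufer coordinate can only add one absent label and remove one, so $\Lambda_n$ moves by at most~$1$.) Both approaches finish with the identical Markov step; yours trades a black-box citation for a short probabilistic argument, which is arguably the cleaner choice here.
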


\begin{proof}
The first two statements are classical: note that the degree of a fixed vertex is the sum of $n-1$ independent $0$-$1$-random variables, thus follows a binomial distribution. Using the second inequality of Lemma~\ref{lem:chernoff}, we find that the probability for any fixed vertex to have degree greater than $pn + n^{2/3}$ is at most $e^{-cn^{1/3}}$ for some constant $c > 0$ if $n$ is large enough, so the expected number of vertices whose degree is greater than $pn + n^{2/3}$ is at most $n e^{-cn^{1/3}}$. Combined with Markov's inequality, this shows that the probability that at least one of the vertices has degree greater than $pn + n^{2/3}$ must tend to $0$. The same argument applies to the minimum degree.

\medskip

For the third statement, we use Janson's results (see Theorem~\ref{thm:janson}) on the distribution of the number of spanning trees: our statement can be rewritten as
$$\log \st_n(G(n,p)) - \log (n^{n-2}p^{n-1}) \geq -n^{1/6},$$
and Janson's distributional results clearly imply that this holds with probability tending to $1$ as long as $(1-p)n^2$ still goes to $\infty$.  If $p$ goes even faster to $1$, then one can e.g.~apply Janson's theorem with $p = 1-n^{-3/2}$ and use the fact that the number of spanning trees increases when edges are added.

\medskip

Let us finally deal with the last statement: we show that the expected number of such spanning trees is substantially smaller than the given bound, so that we can apply Markov's inequality. To this end, we need a bound on the number of trees on a set of $n$ vertices whose number of leaves does not lie in the interval $[\frac{n}{e} - n^{2/3},\frac{n}{e} + n^{2/3}]$. It is known that the number of leaves in a random labelled tree with $n$ vertices (equivalently, a random spanning tree of the complete graph $K_n$) satisfies a central limit theorem with mean $\sim \frac{n}{e}$ and variance $\sim \frac{(e-2)n}{e^2}$. Moreover, an exponential tail bound holds: for some constants $\kappa,\lambda > 0$, the number of trees with fewer than $\frac{n}{e} - t$ or more than $\frac{n}{e} + t$ leaves is at most $n^{n-2}e^{-\kappa t^2/n}$, provided that $t \leq \lambda n$. One way to prove this is to note that the bivariate generating function for labelled trees (where the first variable is associated with the number of vertices and the second with the number of leaves) satisfies the conditions of \cite[Theorems 2.21--2.23]{Drmota2009Random} (cf. \cite[Theorem 3.13]{Drmota2009Random}). Specifically, inequality (2.26) of \cite[Theorem 2.22]{Drmota2009Random} gives the desired tail bound.

Each tree has probability $p^{n-1}$ of occurring as spanning tree in $G(n,p)$, so the expected number of spanning trees satisfying the stated condition is at most $n^{n-2}p^{n-1}e^{-\kappa n^{1/3}}$. Now another standard application of Markov's inequality completes the proof.
\end{proof}

Throughout the rest of this section, we assume that $p \to p_{\infty} > 0$ as $n \to \infty$. By Lemma~\ref{lem:four_statements}, we can assume in the following that the four statements of the lemma are all satisfied. This turns out to be sufficient for our purposes. Let us recall the notation $\prs_k(G)$ from the previous section, which stands for the number of pairs $(S,T)$ of a spanning tree $T$ of $G$ and a subtree $S$ of $T$ such that $|S| = |T| - k$.

\begin{lemma}\label{lem:Pk1}
Suppose that a graph $G$ with $n$ vertices satisfies the first two conditions of Lemma~\ref{lem:four_statements}. Define $\prs_k(G)$ as in the proof of Theorem~\ref{thm:lower}. We have, for every $k \leq n^{1/6}$,
$$\prs_k(G) = \st_{n-k}(G) p^k n^k \big(1 + O(n^{-1/6})\big),$$
where the $O$-constant does not depend on $k$.
\end{lemma}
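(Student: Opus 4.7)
The plan is to fix a subtree $S$ of $G$ with $|V(S)| = n-k$, count the number $N(S)$ of spanning trees of $G$ that contain $S$ as a subtree, and sum to obtain $\prs_k(G) = \sum_S N(S)$. If we can establish the uniform estimate $N(S) = (pn)^k(1 + O(n^{-1/6}))$, the lemma follows immediately.

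To bound $N(S)$, I would parametrize the extensions by a ``parent map.'' Let $U = V(G) \setminus V(S)$, so $|U| = k$. Fix any root $s^* \in V(S)$; then every spanning tree $T \supseteq S$ assigns to each $u \in U$ a unique parent $p(u) \in V(G)$ which must be a neighbor of $u$ in $G$. Conversely, any map $p : U \to V(G)$ sending each $u$ to a neighbor and satisfying the \emph{acyclicity} condition---iterating $p$ from any $u \in U$ eventually reaches $V(S)$---reconstructs a unique $T$. This parametrization yields the sandwich
$$\prod_{u \in U} \deg_S(u) \;\leq\; N(S) \;\leq\; \prod_{u \in U} \deg_G(u),$$
where $\deg_S(u)$ counts neighbors of $u$ inside $V(S)$: the lower bound considers only parent maps with image in $V(S)$ (all $u$ are then leaves of $T$, and acyclicity is automatic), while the upper bound simply drops the acyclicity constraint.

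Both products collapse to the same asymptotic under assumptions (1)--(2) of Lemma~\ref{lem:four_statements}. Since $\deg_G(u) \in [pn - n^{2/3},\, pn + n^{2/3}]$ and $\deg_S(u) = \deg_G(u) - \deg_U(u) \in [pn - n^{2/3} - (k-1),\, pn + n^{2/3}]$ (using $\deg_U(u) \leq k-1$), and since $p$ is bounded away from $0$, both quantities equal $pn(1 + O(n^{-1/3}))$ uniformly in $u$. Taking the $k$-th power gives $(1 + O(n^{-1/3}))^k = \exp(O(kn^{-1/3})) = 1 + O(n^{-1/6})$ whenever $k \leq n^{1/6}$, so both bounds equal $(pn)^k(1 + O(n^{-1/6}))$. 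Squeezing yields $N(S) = (pn)^k(1 + O(n^{-1/6}))$ uniformly in $S$, and summing over $S$ finishes the proof.

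The main conceptual step is the parent-map parametrization, which cleanly decouples the dominant ``attachments to $V(S)$'' weight from the low-order corrections coming from attachments within $U$; the upper bound $\prod_u \deg_G(u)$ absorbs those corrections for free, so no finer tool (such as the Matrix--Tree theorem) is necessary. The threshold $k \leq n^{1/6}$ is precisely what is needed to keep $(1 + O(n^{-1/3}))^k$ within $1 + O(n^{-1/6})$, which is why that particular exponent was chosen in the proof of Theorem~\ref{thm:lower}.
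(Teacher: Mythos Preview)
Your proof is correct, and your upper bound is actually cleaner than the paper's. The lower bounds coincide: both you and the paper attach each $u \in U$ directly to a neighbour in $V(S)$, yielding $N(S) \geq (\delta(G)-k)^k$. For the upper bound, however, the paper decomposes extensions according to the forest that $T$ induces on $U$: it bounds the number of $r$-edge forests on $U$ by $\binom{k(k-1)/2}{r}$, uses the AM--GM inequality to control the product of component sizes by $e^r$, and sums over $r$ to obtain $N(S) \leq \Delta(G)^k\bigl(1+\tfrac{e}{\Delta(G)}\bigr)^{k(k-1)/2}$. Your parent-map injection bypasses all of this: rooting $T$ at any $s^* \in V(S)$ and recording, for each $u \in U$, its parent in $T$ gives an injective map from extensions into $\prod_{u \in U} N_G(u)$, so $N(S) \leq \prod_{u} \deg_G(u) \leq \Delta(G)^k$ outright. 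This is both shorter and yields a slightly sharper bound (no extraneous $(1+e/\Delta(G))^{k(k-1)/2}$ factor). The only point worth making explicit is that the injection relies on the fact that, since $S$ is connected and contains the root, every edge of $T$ outside $E(S)$ is of the form $\{u,p_T(u)\}$ for some $u \in U$; this is what guarantees that $T$ is recoverable from $S$ and the parent map, and you might spell it out in a final version.
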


\begin{proof}
We determine the number $\prs_k(G)$ of pairs $(S,T)$ consisting of a spanning tree $T$ and a subtree $S$ with $n-k$ vertices contained in $T$ by first fixing $S$ (for which we have $\st_{n-k}(G)$ possibilities) and counting the number of ways to extend $S$ to a spanning tree $T$.

Clearly, each of the $k$ vertices that do not belong to $S$ can be connected to one of the vertices of $S$ by one of at least $\delta(G) - k \geq pn - n^{2/3} - k$ edges. This means that the number of possible extensions to a tree $T$ is at least
$$(pn - n^{2/3} - k)^k = p^k n^k \big(1 - O(n^{-1/3})\big)^k = p^k n^k \big( 1 - O(kn^{-1/3}) \big) = p^k n^k \big( 1 - O(n^{-1/6}) \big).$$
Now we prove a matching upper bound. The graph induced by $T$ on the vertices that do not belong to $S$ must be a forest and thus consist of $r$ edges for some $r < k$. The number of components is then exactly $k-r$, and each of them must be connected to $S$ by precisely one edge.

\medskip

There are clearly at most $\binom{k(k-1)/2}{r}$ choices for the induced forest (since it consists of a subset of the $\binom{k}{2} = \frac{k(k-1)}{2}$ edges). If the component sizes of the forest are $a_1,a_2,\ldots,a_{k-r}$, then there are $a_1a_2\cdots a_{k-r}$ ways to choose the vertices that have a neighbour in $S$, and at most $\Delta(G)^{k-r}$ choices for the vertices in $S$ that they are connected to. By the inequality between the arithmetic and the geometric mean, we have
$$a_1a_2\cdots a_{k-r} \leq \Big( \frac{a_1+a_2+\cdots+a_{k-r}}{k-r} \Big)^{k-r} = \Big( \frac{k}{k-r} \Big)^{k-r} = \Big(1 - \frac{r}{k} \Big)^{r(1-k/r)}.$$
Noting that the function $x \mapsto (1-x)^{1-1/x}$ is bounded above by $e$ for $x \in (0,1]$, we can conclude that
$$a_1a_2\cdots a_{k-r} \leq e^r.$$
Thus the total number of ways to extend $S$ to a tree $T$ is at most
\begin{align*}
\sum_{r=0}^{k-1} \binom{k(k-1)/2}{r} e^r \Delta(G)^{k-r} &\leq \Delta(G)^k \sum_{r=0}^{k(k-1)/2} \binom{k(k-1)/2}{r} \Big(\frac{e}{\Delta(G)} \Big)^r \\
&= \Delta(G)^k\Big(1 + \frac{e}{\Delta(G)}\Big)^{k(k-1)/2}.
\end{align*}
Since we are assuming that $pn - n^{2/3} \leq \delta(G) \leq \Delta(G) \leq pn + n^{2/3}$ and $k \leq n^{1/6}$, we have
$$\Delta(G)^k = (pn)^k \big(1 + O(n^{-1/3})\big)^k = p^k n^k \big( 1 + O(kn^{-1/3}) \big) = p^k n^k \big( 1 + O(n^{-1/6}) \big)$$
as well as
$$\Big(1 + \frac{e}{\Delta(G)}\Big)^{k(k-1)/2} \leq \exp \Big( \frac{ek(k-1)}{2\Delta(G)} \Big) = 1 + O(k^2 n^{-1}) = 1 + O(n^{-2/3}),$$
so the desired upper bound follows as well.
\end{proof}

For our next step, we need bounds on the number of subtrees of a tree.

\begin{lemma}\label{lem:est_by_leaves}
Let $T$ be a tree with $n$ vertices, of which $\ell$ are leaves. The number of subtrees of $T$ with $n-k$ vertices satisfies
$$\binom{\ell}{k} \leq \st_{n-k}(T) \leq \binom{\ell+k-1}{k}.$$
\end{lemma}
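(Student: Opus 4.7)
The lower bound is immediate: removing any $k$ of the $\ell$ leaves of $T$ produces a subtree on $n-k$ vertices (leaf removal preserves connectedness and acyclicity), and distinct subsets of leaves give distinct subtrees, so $\st_{n-k}(T) \ge \binom{\ell}{k}$.

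For the upper bound, a naive pruning argument gives only a bound of order $\ell^k$, which is too weak. Instead, I would proceed by induction on $n = |V(T)|$, but with a strengthened inductive statement: for every subset $A$ of the leaves of $T$, the reverse generating function
\begin{equation*}
g^A_T(x) := \sum_{k \ge 0} \st^A_{n-k}(T) \, x^k
\end{equation*}
satisfies $g^A_T(x) \le (1-x)^{-(\ell - |A|)}$ coefficient-wise, where $\st^A_m(T)$ denotes the number of $m$-vertex subtrees of $T$ that contain every vertex of $A$. The case $A = \emptyset$ of this strengthening is precisely the desired bound $\st_{n-k}(T) \le \binom{\ell + k - 1}{k}$.

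For the inductive step, fix a pendant leaf $v^*$ of $T$ with unique neighbour $w$ and let $T' = T - v^*$. Partitioning the subtrees of $T$ containing $A$ according to whether they contain $v^*$ yields a linear recursion relating $g^A_T$ to generating functions of the form $g^B_{T'}$ for appropriate subsets $B$ that incorporate $w$ when needed (since any subtree with at least two vertices containing $v^*$ must also contain $w$). Two cases then arise according to the degree of $w$ in $T$. If $\deg_T(w) = 2$, then $w$ becomes a new leaf of $T'$ and $\ell(T') = \ell$; the strengthened hypothesis applies to the relevant subsets $B \subseteq \mathrm{Leaves}(T')$, and the two resulting bounds combine to exactly $(1-x)^{-(\ell - |A|)}$ via the elementary identity $x(1-x)^{-m} + (1-x)^{-(m-1)} = (1-x)^{-m}$. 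If $\deg_T(w) \ge 3$, then $\ell(T') = \ell - 1$ and $w$ is still internal in $T'$, so the hypothesis can only be applied to the weaker subset $B \cap \mathrm{Leaves}(T')$; this yields the looser majorant $(1+x)(1-x)^{-(\ell - 1 - |A|)}$, which a single application of Pascal's identity shows still lies coefficient-wise below $(1-x)^{-(\ell - |A|)}$.

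The main obstacle is precisely the need for this strengthening. If one attempts to induct on the bare bound (that is, on $A = \emptyset$ alone), then in the case $\deg_T(w) = 2$ the recursion demands control on the number of subtrees of $T'$ that contain the newly created leaf $w$, and no such information is available from the weaker hypothesis, so the induction breaks. Carrying along an arbitrary subset of distinguished leaves plugs this gap uniformly in both cases, and the resulting bound is tight (achieved for paths), which provides a useful sanity check.
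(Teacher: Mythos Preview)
Your lower bound matches the paper's. For the upper bound the paper takes a different, non-inductive route: it fixes an ordering $v_1,\ldots,v_\ell$ of the leaves and encodes each subtree $S$ by the $\ell$-tuple $(a_1,\ldots,a_\ell)$, where $a_j = |S_j|-|S_{j-1}|$ and $S_j$ is the smallest subtree of $T$ containing $S\cup\{v_1,\ldots,v_j\}$. Since $\sum_j a_j = k$ and the encoding is shown (by a backward reconstruction argument) to be injective, the bound $\binom{\ell+k-1}{k}$ falls out as the number of weak compositions of $k$ into $\ell$ nonnegative parts. The paper's argument is thus direct and gives a concrete combinatorial meaning to the upper bound; your inductive generating-function approach is less explicit but packages all $k$ at once, and the strengthening to distinguished leaf sets $A$ is a nice device that makes the equality case (paths) transparent. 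One small wrinkle worth noting in your sketch: when $A=\emptyset$, the single-vertex subtree $\{v^*\}$ is not captured by any subtree of $T'$ containing $w$, so the recursion picks up a stray $x^{n-1}$ term. This only affects the coefficient at $k=n-1$, where the inequality $\st_1(T)=n\le\binom{\ell+n-2}{n-1}$ can simply be verified directly (it holds since $\ell\ge 2$ for $n\ge 2$), so the induction still closes.
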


\begin{proof}
The lower bound is trivial: removing any subset of $k$ leaves from $T$, we obtain a subtree with $n-k$ vertices. For the upper bound, we associate every subtree $S$ of $T$ with an $\ell$-tuple $(a_1,a_2,\ldots,a_\ell)$ of nonnegative integers. To this end, let us denote the leaves by $v_1,v_2,\ldots,v_{\ell}$. For $0 \leq j \leq \ell$, we define $S_j$ as the smallest subtree of $T$ that contains $S$ as well as $v_1,v_2,\ldots,v_j$. In particular, $S_0 = S$ and $S_{\ell} = T$. 

\medskip

It is easy to see that $S_j$ is obtained from $S_{j-1}$ by adding the path from $v_j$ to the vertex of $S_{j-1}$ nearest to it (this path might be empty if $v_j$ is already contained in $S_{j-1}$). Now set $a_j = |S_j| - |S_{j-1}|$ for $1 \leq j \leq \ell$ to obtain the $\ell$-tuple $(a_1,a_2,\ldots,a_{\ell})$. In other words, $a_j$ is the number of vertices on the path that is added to $S_{j-1}$ to obtain $S_j$ (including $v_j$ if it is not in $S_{j-1}$, but not the other end).

\medskip

Now we claim that the subtree $S$ can be reconstructed uniquely from $(a_1,a_2,\ldots,a_{\ell})$ (if there is any subtree associated with the specific $\ell$-tuple). We use backwards induction on $j$ to show that $S_j$ is unique for $0 \leq j \leq \ell$. This is trivial for $j = \ell$, so we focus on the induction step. Assume that $S_j$ is known for some $j > 0$, and let $w$ be the vertex nearest to $v_j$ in $S_j$ whose degree is greater than $2$. If no such vertex exists, then $S_j$ is a path with $v_j$ at one of its ends, and we let $w$ be the other end (this is only possible if $j=1$ or $j=2$).

\medskip

In the former case (the degree of $w$ is greater than $2$), consider the components of $S_j - w$: each of them needs to contain either a vertex of $S$ or one of the leaves $v_1,v_2,\ldots,v_j$, since we could otherwise remove it from $S_j$ to obtain a smaller tree that still contains all of $S$ as well as $v_1,v_2,\ldots,v_j$, contradicting the definition of $S_j$. Since there are at least three such components, at least two of them contain a vertex of $S$ or one of $v_1,v_2,\ldots,v_{j-1}$. This implies that $w$ must be a vertex of $S_{j-1}$. Note that this is also true in the latter case, where $S_j$ is a path and $w$ the other end: if $w$ was not part of $S$ or the leaf $v_1$, we could remove it from $S_j$.

\medskip

So we find that the difference between $S_{j-1}$ and $S_j$ is always a part of the path between $w$ and $v_j$ that is either empty or contains $v_j$. This means that it is uniquely determined by its number of vertices $a_j$, and we can determine $S_{j-1}$ from $S_j$ and $a_j$, completing our induction proof that $S$ is uniquely determined by the $\ell$-tuple $(a_1,a_2,\ldots,a_{\ell})$.

\medskip

Since
$$|T| - |S| = \sum_{j=1}^{\ell} \big( |S_j| - |S_{j-1}| \big) = \sum_{j=1}^{\ell} a_j,$$
we find that $\st_{n-k}(T)$ is bounded above by the number of $\ell$-tuples $(a_1,a_2,\ldots,a_{\ell})$ of nonnegative integers that satisfy
$$\sum_{j=1}^{\ell} a_j = k,$$
which is well known to be $\binom{\ell+k-1}{k}$. This completes the proof of the upper bound.
\end{proof}

\begin{lemma}\label{lem:Pk2}
Suppose that a graph $G$ with $n$ vertices satisfies the last two conditions of Lemma~\ref{lem:four_statements}. Define $\prs_k(G)$ as in the proof of Theorem~\ref{thm:lower}. We have, for every $k \leq n^{1/6}$,
$$\prs_k(G) = \st_{n}(G) \frac{n^k}{e^k k!} \big(1 + O(n^{-1/6})\big),$$
where the $O$-constant does not depend on $k$.
\end{lemma}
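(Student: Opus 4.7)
The plan is to count $\prs_k(G)$ by first fixing a spanning tree $T$ and then counting its subtrees of order $n-k$, so that
$$\prs_k(G) = \sum_{T} \st_{n-k}(T),$$
where the sum runs over all spanning trees $T$ of $G$. The key observation is that by Lemma~\ref{lem:est_by_leaves}, $\st_{n-k}(T)$ is essentially determined by the number of leaves $\ell = \ell(T)$ of $T$, and the fourth condition of Lemma~\ref{lem:four_statements} tells us that almost all spanning trees have $\ell$ very close to $n/e$.

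First I would split the sum according to whether $T$ is \emph{typical}, meaning $\ell(T) \in [n/e - n^{2/3}, n/e + n^{2/3}]$, or \emph{atypical}. For a typical $T$, Lemma~\ref{lem:est_by_leaves} gives
$$\binom{\ell}{k} \leq \st_{n-k}(T) \leq \binom{\ell + k - 1}{k},$$
and since $k \leq n^{1/6}$ and $\ell = n/e + O(n^{2/3})$, a direct expansion shows that both bounds equal
$$\frac{\ell^k}{k!}\bigl(1 + O(k^2/\ell)\bigr) = \frac{(n/e)^k}{k!}\bigl(1 + O(k n^{-1/3}) + O(k^2/n)\bigr) = \frac{(n/e)^k}{k!}\bigl(1 + O(n^{-1/6})\bigr),$$
with the constant uniform in $k$. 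Summing over the typical spanning trees gives a contribution of $N_{\mathrm{typ}} \cdot \frac{(n/e)^k}{k!}(1 + O(n^{-1/6}))$, where $N_{\mathrm{typ}}$ denotes their number.

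For the atypical trees, I would apply the crude bound $\st_{n-k}(T) \leq \binom{n+k-1}{k} \leq (2n)^k/k!$ from Lemma~\ref{lem:est_by_leaves}, and combine it with the fourth condition of Lemma~\ref{lem:four_statements}, which caps the number of atypical spanning trees at $n^{n-2}p^{n-1} e^{-n^{1/4}}$. Dividing by the main term and using the lower bound $\st_n(G) \geq n^{n-2}p^{n-1}e^{-n^{1/6}}$ from the third condition, the ratio
$$\frac{n^{n-2}p^{n-1}e^{-n^{1/4}} \cdot (2n)^k/k!}{n^{n-2}p^{n-1}e^{-n^{1/6}} \cdot (n/e)^k/k!} \leq e^{-n^{1/4} + n^{1/6}}(2e)^k \leq e^{-n^{1/4}(1 - o(1))}$$
decays faster than any polynomial for $k \leq n^{1/6}$, so this contribution is absorbed into the $O(n^{-1/6})$ error. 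The same condition (4) also implies $N_{\mathrm{typ}} = \st_n(G)(1 + O(e^{-n^{1/4}+n^{1/6}}))$, after which the claimed formula follows by combining the two contributions.

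The main obstacle is keeping the various error terms uniform in $k$ while confirming that the atypical spanning trees contribute a negligible amount; the crude bound $\binom{\ell+k-1}{k} \leq (2n)^k/k!$ is generous but the gap between $e^{-n^{1/4}}$ and $e^{-n^{1/6}}$ from conditions (3) and (4) supplies more than enough room to swallow the factor $(2e)^k$ for $k \leq n^{1/6}$. Once this is verified, the approximation of $\binom{\ell}{k}$ and $\binom{\ell+k-1}{k}$ by $(n/e)^k/k!$ for typical trees is routine.
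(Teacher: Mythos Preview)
Your proposal is correct and follows essentially the same route as the paper: split the sum $\prs_k(G)=\sum_T \st_{n-k}(T)$ into typical and atypical spanning trees according to the leaf count, use Lemma~\ref{lem:est_by_leaves} to pin down $\st_{n-k}(T)$ for typical $T$, and use conditions (3) and (4) of Lemma~\ref{lem:four_statements} to show the atypical contribution is swallowed by the gap between $e^{-n^{1/4}}$ and $e^{-n^{1/6}}$. The only cosmetic difference is that for atypical trees the paper uses the trivial bound $\st_{n-k}(T)\le\binom{n}{k}$ (any $(n-k)$-vertex subtree is determined by its vertex set) rather than your $\binom{n+k-1}{k}\le(2n)^k/k!$, but both bounds are easily absorbed.
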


\begin{proof}

For every tree $T$ with a number of leaves $\ell$ between $\frac{n}{e} - n^{2/3}$ and $\frac{n}{e} + n^{2/3}$, we use the bounds of Lemma~\ref{lem:est_by_leaves}. For $k \leq n^{1/6}$, we have
$$\frac{(n/e - n^{2/3} - n^{1/6})^k}{k!}
\leq \binom{\ell}{k} \leq \st_{n-k}(T) \leq \binom{\ell+k-1}{k} \leq \frac{(n/e+n^{2/3} + n^{1/6})^k}{k!},$$
and both the upper and lower bound are of the form
$$\frac{n^k}{e^k k!} \big( 1 + O(n^{-1/3}) \big)^k = \frac{n^k}{e^k k!} \big(1 + O(k n^{-1/3}) \big) = \frac{n^k}{e^k k!} \big(1 + O(n^{-1/6}) \big).$$
Therefore, the contribution of all these trees to $\prs_k(G)$ is
\begin{align*}
\Big( \st_n(G) - O(n^{n-2}p^{n-1}e^{-n^{1/4}}) \Big) &\frac{n^k}{e^k k!} \big(1 + O(n^{-1/6}) \big) \\
&= \st_n(G) \Big( 1 - O\big(e^{n^{1/6}-n^{1/4}}\big) \Big) \frac{n^k}{e^k k!} \big(1 + O(n^{-1/6}) \big) \\
&= \st_n(G) \frac{n^k}{e^k k!} \big(1 + O(n^{-1/6}) \big).
\end{align*}
On the other hand, every tree $T$ with $n$ vertices has at most $\binom{n}{k} \leq \frac{n^k}{k!}$ subtrees with $n-k$ vertices. Therefore, the contribution of spanning trees $T$ with more than $\frac{n}{e}  + n^{2/3}$ or fewer than $\frac{n}{e} - n^{2/3}$ leaves to $\prs_k(G)$ is at most
$$n^{n-2} p^{n-1} e^{-n^{1/4}} \cdot \frac{n^k}{k!} \leq \st_n(G) e^{2n^{1/6}-n^{1/4}} \cdot \frac{n^k}{e^k k!}$$ 
by our assumptions on the number of such spanning trees. Since the exponent $2n^{1/6} - n^{1/4}$ goes to $-\infty$, this contribution is negligible, and we conclude that
$$\prs_k(G) = \st_n(G) \frac{n^k}{e^k k!} \big(1 + O(n^{-1/6})\big).$$
\end{proof}

Now we can put together all ingredients to prove Theorem~\ref{thm:dense}.

\begin{proof}[Proof of Theorem~\ref{thm:dense}]
We can assume that all four statements of Lemma~\ref{lem:four_statements} are satisfied, since each of them holds with high probability.

\medskip

For $k \leq n^{1/6}$, Lemmas~\ref{lem:Pk1} and ~\ref{lem:Pk2} yield
$$\st_{n-k}(G) p^k n^k \big(1 + O(n^{-1/6})\big) = \prs_k(G) = \st_{n}(G) \frac{n^k}{e^k k!} \big(1 + O(n^{-1/6})\big),$$
thus
$$\frac{\st_{n-k}(G)}{\st_n(G)} = \frac{1}{p^k e^k k!} \big(1 + O(n^{-1/6})\big).$$
From the proof of Theorem~\ref{thm:lower}, we know that the contribution that comes from terms with $k > n^{1/6}$ is completely negligible, given only the condition on the minimum degree from Lemma~\ref{lem:four_statements}: for every positive exponent $A$, we have
$$\sum_{k > n^{1/6}} \frac{\st_{n-k}(G)}{\st_n(G)} = O(n^{-A}).$$
Since we also have
$$\sum_{k > n^{1/6}} \frac{1}{p^k e^k k!} = O(n^{-A}),$$
we can finally conclude that
$$\frac{1}{\spp(G)} = \sum_{k=0}^{n-1} \frac{\st_{n-k}(G)}{\st_n(G)} = \sum_{k=0}^{\infty} \frac{1}{p^k e^k k!} \big(1 + O(n^{-1/6})\big) + O(n^{-A}) = e^{1/(ep)}+ O(n^{-1/6}),$$
which converges to $e^{1/(ep_{\infty})}$ by assumption.
\end{proof}

\section{Sparse random graphs}\label{sec:sparse}

In this section, we consider Erd\H{o}s-R\'enyi graphs $G(n,p)$ with $p$ tending to $0$. Based on the results of the previous section and the fact that
$$\lim_{p \to 0^+} e^{-1/(ep)} = 0,$$
one naturally expects the following theorem to hold.

\begin{theo}\label{thm:sparse}
Suppose that $p \to 0$ as $n \to \infty$. We have
$$\spp(G(n,p)) \overset{p}{\to} 0$$
as $n \to \infty$.
\end{theo}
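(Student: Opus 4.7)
My plan is to separate the very sparse regime where $G(n,p)$ is disconnected with high probability from the remaining regime, and to adapt the framework of Section~\ref{sec:dense} in the latter. Specifically, if $np \leq (1-\epsilon)\log n$ for some fixed $\epsilon>0$, then a first-moment calculation shows that $G(n,p)$ contains isolated vertices with probability tending to $1$, so $G$ is disconnected and $\spp(G)=0$ with high probability; this handles every regime outside $np-\log n\to\infty$. It therefore suffices to treat the case $np-\log n\to\infty$, in which $G$ is connected whp and in particular $np\to\infty$.

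In this regime I would mirror the template of Section~\ref{sec:dense} step by step. Parts (1) and (2) of Lemma~\ref{lem:four_statements}, the concentration of the extremal degrees around $pn$, go through by the same Chernoff bounds after rescaling the error term (e.g.\ replacing $n^{2/3}$ by something like $\sqrt{pn\log n}$ to account for the smaller mean). Part (4), the bound on the number of spanning trees with atypical leaf count, is a first-moment argument based on Drmota's tail estimate for leaves in a random labelled tree combined with multiplication by $p^{n-1}$; the proof applies for any $p$. With these ingredients, suitable versions of Lemmas~\ref{lem:Pk1} and~\ref{lem:Pk2} yield, for $k$ up to roughly $(pn)^{1/6}$,
$$\frac{\st_{n-k}(G)}{\st_n(G)} \;\geq\; \frac{1-o(1)}{(ep)^k\,k!},$$
and summing over $k$ gives $1/\spp(G)\geq e^{1/(ep)}(1-o(1))$ with high probability. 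Since $p\to 0$ makes $e^{1/(ep)}\to\infty$, this proves $\spp(G)\overset{p}{\to}0$ and even gives the explicit whp upper bound $\spp(G)\leq e^{-(1-o(1))/(ep)}$ that the introduction hints at.

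The main obstacle is the sparse analogue of part (3) of Lemma~\ref{lem:four_statements}: one needs $\st_n(G)\geq \Ex[\st_n(G)]\cdot e^{-o(n^{1/3})}$ with high probability, in order for the comparison with the atypical-trees bound to force the fraction of atypical spanning trees to vanish (and consequently the average leaf count over spanning trees to be $(1-o(1))n/e$). Janson's theorem (Theorem~\ref{thm:janson}) delivers this when $\sqrt n\,p$ is bounded away from zero, but in the intermediate range where $p$ is of order $(\log n)/n$ up to $1/\sqrt n$, one has to appeal instead to concentration of the nonzero Laplacian eigenvalues of $G(n,p)$ around $np$ (in the spirit of Feige--Ofek, Vu, and related work on spectra of sparse random graphs), which via the matrix-tree theorem is readily converted into a sub-exponential-factor lower bound on $\st_n(G)$. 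Carrying this out uniformly as $p$ drops toward the connectivity threshold is the most delicate part of the proof.
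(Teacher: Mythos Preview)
Your outline correctly isolates the obstacle, but the proposed resolution contains a genuine gap. The target you write down---that $\st_n(G)\ge \Ex[\st_n(G)]\cdot e^{-o(n^{1/3})}$ with high probability---is actually false once $p\lesssim n^{-1/3}$: already in the range where Theorem~\ref{thm:janson} applies it gives $\log\bigl(\st_n(G)/\Ex[\st_n(G)]\bigr)=-(1-p)/p+O_p(p^{-1/2})$, so the typical shortfall from the mean is of order $1/p$, which exceeds $n^{1/3}$ whenever $p\le n^{-1/3}$. You can widen the leaf window to absorb a deficit of size $O(1/p)$, but then the usable range of $k$ shrinks and the sum no longer recovers the full $e^{1/(ep)}$; at best you get $\spp(G)\to 0$ \emph{provided} you can establish $\log\bigl(\Ex[\st_n(G)]/\st_n(G)\bigr)=O(1/p)$ with high probability all the way down to $p\sim(\log n)/n$. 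The matrix--tree route does give a second-order contribution to $\sum_{i\ge 2}\log(\lambda_i/(np))$ of the right size $O(1/p)$, but near the connectivity threshold the relative deviations $(\lambda_i-np)/(np)$ are not small (since $\|L-\Ex L\|$ is comparable to $np$ there), so the Taylor expansion does not truncate and the conversion is far from ``readily'' done. This is precisely the difficulty the paper flags immediately after stating Theorem~\ref{thm:sparse}.

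The paper bypasses the issue by a different device. Rather than bounding $\st_n(G)$ from below, it proves a \emph{constructive} lower bound on the total number of subtrees, $\tot(G)\ge p^n n!\,e^{an}$ with high probability (Lemma~\ref{trees_lower_bound}), by growing trees vertex by vertex from a small seed and controlling overcounting via linear extensions. It then argues by dichotomy: either $\st_n(G)<p^n n!\,e^{3an/4}$, whence $\spp(G)=\st_n(G)/\tot(G)\le e^{-an/4}$ directly; or $\st_n(G)\ge p^n n!\,e^{3an/4}$, in which case a first-moment bound on labelled trees with fewer than $\beta n$ leaves (Lemma~\ref{few_leaves}, an $e^{-\Theta(n)}$ tail rather than your $e^{-\Theta(t^2/n)}$ window tail) forces all but an $e^{-an/4}$-fraction of spanning trees to have at least $\beta n$ leaves, and the double-counting inequality of Lemma~\ref{lem:Pk_upper_hp} then yields $1/\spp(G)\ge e^{c/p}$. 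The trade-off is that the constant $c$ obtained is far from the conjectural $1/e$ you are aiming for; in exchange, no lower bound on $\st_n(G)$ is ever required, and the argument works uniformly down to the connectivity threshold.
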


Somewhat surprisingly, the proof becomes more complicated if $p$ is very small. This is mainly due to the fact that Janson's result on the distribution of the number of spanning trees in $G(n,p)$ (Theorem~\ref{thm:janson}) is only available when $p$ is at least of order $n^{-1/2}$. This was necessary to show that ``most'' spanning trees have sufficiently many leaves. The main technical difficulty for us will be to replace Janson's theorem by a different argument.

We will be able to prove a somewhat stronger result:

\begin{theo}\label{thm:dense_strong}
There exists an absolute constant $c > 0$ such that the inequality
$$\spp(G(n,p)) \leq e^{-c/p}$$
holds with high probability as $n \to \infty$, for any choice of $p$.
\end{theo}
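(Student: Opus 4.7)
The plan is to follow the double-counting template of Sections~\ref{sec:mindeg} and~\ref{sec:dense}, but with the crucial leaf-count input supplied by a direct moment argument rather than by Janson's theorem. Since $\tot(G)\geq\st_{n-k}(G)$ for every $k<n$, it suffices to exhibit one value $k=k(p)$ for which $\st_{n-k}(G)/\st_n(G)\geq e^{c/p}$ holds with high probability, because then $\spp(G)\leq\bigl(1+\st_{n-k}(G)/\st_n(G)\bigr)^{-1}\leq e^{-c/p}$. The natural choice is $k:=\lfloor 1/(ep)\rfloor$, since Stirling gives
\begin{equation*}
\frac{\Ex\,\st_{n-k}(G)}{\Ex\,\st_n(G)}=\frac{1+o(1)}{(ep)^k\,k!}\sim e^{1/(ep)}\sqrt{\tfrac{ep}{2\pi}},
\end{equation*}
precisely of the target order $e^{c/p}$ for any fixed $c<1/e$. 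Below the connectivity threshold the theorem is vacuous since $\spp(G)=0$, so I may assume $p\geq(1+\varepsilon)(\log n)/n$.

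For this $k$ the upper bound $\prs_k(G)\leq\st_{n-k}(G)\,\Delta(G)^k(1+o(1))$ from Lemma~\ref{lem:Pk1} still applies once a Chernoff bound gives $\Delta(G)\leq np+O(\sqrt{np\log n})$ with high probability, because $k\ll\Delta(G)$ throughout this range of $p$. The task therefore reduces to a matching lower bound
\[
\prs_k(G)\geq\st_n(G)\cdot\tfrac{(n/e)^k}{k!}\,(1-o(1))
\]
with high probability, which in Lemma~\ref{lem:Pk2} was extracted from Janson's theorem via the assertion that most spanning trees of $G(n,p)$ have roughly $n/e$ leaves.

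To bypass Janson, I would work directly with the weighted count
\begin{equation*}
W:=\sum_T\binom{\ell_T}{k}\,\mathbf{1}[T\subseteq G],
\end{equation*}
where $T$ ranges over all labelled $n$-vertex trees on the vertex set of $G$ and $\ell_T$ is its leaf count. Lemma~\ref{lem:est_by_leaves} gives $\prs_k(G)\geq W$, and the Gaussian tail bound on leaves of random labelled trees already invoked inside Lemma~\ref{lem:four_statements} yields $\Ex W=(1+o(1))\tfrac{(n/e)^k}{k!}\,\Ex\st_n(G)$. I would combine Markov's inequality for an upper bound $\st_n(G)\leq M\,\Ex\st_n(G)$ with a second-moment bound $\Ex W^2\leq(1+o(1))(\Ex W)^2$, giving $W\geq\tfrac12\Ex W$ with high probability via Chebyshev. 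The acceptable $M$ depends on the regime: for $p\geq n^{-1/2+\varepsilon}$ Janson's theorem (now legitimately available) gives $M=1+o(1)$; for smaller $p$ the target $e^{c/p}$ is already super-polynomial in $n$, so Markov with $M=n^A$ is more than enough.

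The principal obstacle will be this second moment. Expanding
\begin{equation*}
\Ex W^2=\sum_{T_1,T_2}\binom{\ell_{T_1}}{k}\binom{\ell_{T_2}}{k}\,p^{|E(T_1)\cup E(T_2)|}
\end{equation*}
and stratifying by the shared forest $F=T_1\cap T_2$ reduces the estimate to Cayley-type counts of tree pairs with prescribed intersection, weighted by joint leaf-count moments. The main term from $F=\emptyset$ already gives $(\Ex W)^2$, and the near-full-overlap terms are cheap; the genuine difficulty lies with the intermediate-overlap range, controlled uniformly in $p$. If an asymptotic $\Ex W^2\sim(\Ex W)^2$ proves unattainable in a narrow sub-window near $p\sim(\log n)/n$, it suffices to obtain $\Ex W^2\leq C(\Ex W)^2$ for some constant $C$, which still yields $W\geq c'\Ex W$ with high probability via Paley--Zygmund and produces the theorem with a slightly smaller absolute constant $c$.
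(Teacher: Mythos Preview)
Your strategy has a genuine gap at the second-moment step. The random variable $W=\sum_T\binom{\ell_T}{k}\mathbf{1}[T\subseteq G]$ inherits the fluctuations of $\st_n(G)$ itself, and those are not small. From Janson's theorem (in the range where it applies), $\log\st_n(G)$ has asymptotic variance $2(1-p)/p$, which for a lognormal variable forces $\Ex\,\st_n(G)^2\big/(\Ex\,\st_n(G))^2\sim e^{2(1-p)/p}$. Since the leaf weights $\binom{\ell_T}{k}$ are essentially constant over typical trees, the same ratio governs $\Ex W^2/(\Ex W)^2$: it is a fixed constant strictly greater than $1$ for constant $p$, and diverges like $e^{\Theta(1/p)}$ as $p\to 0$. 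Hence Chebyshev cannot yield $W\geq\tfrac12\Ex W$ with high probability. Your fallback is also incorrect as stated: Paley--Zygmund with $\Ex W^2\leq C(\Ex W)^2$ gives only $\Prob(W\geq c'\Ex W)\geq 1/C'$ for some constant $C'$, i.e.\ positive probability, not probability tending to~$1$; and in the window $p\asymp(\log n)/n$ even the bounded-ratio hypothesis fails, the ratio being of order $e^{\Theta(n/\log n)}$. A secondary issue: the $(1+o(1))$ in your upper bound $\prs_k(G)\leq\st_{n-k}(G)\,\Delta(G)^k(1+o(1))$ requires $k^2=o(\Delta)$, i.e.\ $p\gg n^{-1/3}$, which does not cover your full range; this is repairable by accepting a factor $b^k$ as in Lemma~\ref{lem:Pk_upper_hp}, but the second-moment obstruction is not.

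The paper sidesteps concentration of $\st_n(G)$ altogether. Instead of comparing $\st_{n-k}(G)$ to $\st_n(G)$ through $\Ex\,\st_n(G)$, it first proves a constructive lower bound $\tot(G)\geq p^n n!\,e^{an}$ with high probability (Lemma~\ref{trees_lower_bound}): one grows subtrees by attaching vertices one at a time in a random order, and controls the overcounting via the hook-length formula for linear extensions of the resulting rooted forests. It then argues by dichotomy. If $\st_n(G)$ is small compared to this bound on $\tot(G)$, the inequality $\spp(G)\leq e^{-c/p}$ is immediate. If $\st_n(G)$ is large, then a first-moment (Markov) bound on the number of spanning trees with fewer than $\beta n$ leaves (Lemma~\ref{few_leaves}) forces \emph{most} spanning trees of $G$ to have linearly many leaves, and the double-counting via $\prs_k(G)$ together with Lemma~\ref{lem:est_by_leaves} and Lemma~\ref{lem:Pk_upper_hp} then yields $\st_{n-k}(G)/\st_n(G)\geq(1-o(1))\binom{\lceil\beta n\rceil}{k}(bpn)^{-k}$, which summed over $k$ gives the result. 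Neither branch needs any lower-tail control on $\st_n(G)$ or on $W$, which is precisely what a second-moment argument cannot deliver here.
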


Obviously, Theorem~\ref{thm:dense_strong} implies Theorem~\ref{thm:sparse}. Let us remark that the constant $c$ that we obtain through our proof is certainly not best possible. It is conceivable that it can be chosen arbitrarily close to $1/e$.

\medskip

For the proof of Theorem~\ref{thm:dense_strong}, we first require a lower bound on the total number of subtrees. The first step is a lemma on the giant component of $G(n,p)$ in the case that $pn \to \infty$.

\begin{lemma}
Suppose that $pn \to \infty$. With high probability, the graph $G(n,p)$ has a connected component that contains all but $o(1/p)$ vertices.
\end{lemma}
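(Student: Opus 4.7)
The plan is a direct first-moment calculation. Let $Y$ denote the total number of vertices of $G(n,p)$ that lie in a connected component of order at most $n/2$. I will show that $\Ex[Y] = o(1/p)$ under the hypothesis $\lambda := pn \to \infty$. Once this is established, Markov's inequality will force $Y = o(1/p)$ with high probability; since $\lambda \to \infty$ also implies $1/p = o(n)$, we then have $Y < n/2$ whp, so at least one component must have order exceeding $n/2$. Such a component is necessarily unique, and its order is at least $n - Y \geq n - o(1/p)$, which is exactly what the lemma asserts.

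For the expected value computation, let $N_k$ be the number of components of $G(n,p)$ of order exactly $k$. A fixed $k$-subset $S$ is a component only if $G[S]$ is connected and none of the $k(n-k)$ potential edges between $S$ and its complement are present; bounding the probability that $G[S]$ is connected by the expected number $k^{k-2}p^{k-1}$ of spanning trees on $S$ (Cayley and Markov) gives
$$\Ex[N_k] \leq \binom{n}{k} k^{k-2} p^{k-1} (1-p)^{k(n-k)}.$$
Multiplying by $k$, using $\binom{n}{k} \leq (en/k)^k$ and $(1-p)^{k(n-k)} \leq e^{-pk(n-k)}$, and noting that $n-k \geq n/2$ when $k \leq n/2$, I expect a clean bound of the form
$$k\, \Ex[N_k] \leq \frac{r^k}{p\, k}, \qquad r := e \lambda\, e^{-\lambda/2}.$$
Because $\lambda \to \infty$, the quantity $r$ tends to $0$, so summing the resulting geometric-like series yields
$$\Ex[Y] \leq \frac{1}{p}\sum_{k\ge 1}\frac{r^k}{k} = \frac{-\log(1-r)}{p} = O\!\left(\frac{\lambda\, e^{-\lambda/2}}{p}\right) = o(1/p),$$
as desired.

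The wrap-up is routine: applying Markov with threshold $\sqrt{r}/p$ converts $\Ex[Y] = O(r/p)$ into $\Prob[Y \geq \sqrt{r}/p] = O(\sqrt{r}) \to 0$, so $Y = o(1/p)$ with high probability, and the rest of the argument follows from the partition/uniqueness observation in the first paragraph.

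I do not anticipate a serious obstacle. The only conceptual point worth flagging is that the familiar ``giant component'' bound $Y = o(n)$ is too weak for this lemma -- $o(1/p)$ may be much smaller than $n$. The improvement over $o(n)$ comes precisely from keeping the factor $(1-p)^{k(n-k)}$ in the bound on $\Ex[N_k]$, which supplies the doubly-exponential decay $\lambda e^{-\lambda/2}$ needed to absorb the extra factor of $p$ in the target.
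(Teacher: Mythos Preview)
Your argument is correct. The calculation $k\,\Ex[N_k] \leq r^k/(pk)$ with $r = e\lambda e^{-\lambda/2}$ goes through exactly as you sketch, and since $\lambda \to \infty$ forces $r \to 0$, the geometric sum and Markov application are unproblematic. The uniqueness/partition step in your first paragraph is also sound.

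Your route is genuinely different from the paper's. The paper uses a two-round \emph{sprinkling} argument: it exposes edges with probability $p/(2-p)$ in a first round, invokes a cited theorem from the random-graphs literature to obtain a component of size at least $n/2$ after that round, and then in a second round (edges with probability $p/2$) shows that each remaining vertex fails to attach to this giant with probability at most $e^{-pn/4}$, concluding via Markov that at most $ne^{-pn/8} = o(1/p)$ vertices stay outside. Your approach, by contrast, is a single-pass first-moment bound on the number of vertices in small components, using the standard tree-count estimate for connectedness together with the cut-edge factor $(1-p)^{k(n-k)}$. The advantage of your method is that it is entirely self-contained (no external giant-component theorem is needed) and slightly sharper in the exponent; the sprinkling technique, on the other hand, is a flexible device that generalises readily to settings where a clean first-moment bound on small components is unavailable.
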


\begin{proof}
By \cite[Theorem 5.4]{Janson2000Random}, there exists a constant $\kappa > 0$ such that $G(n,p)$ has a connected component containing at least half of the vertices with high probability as soon as $p > \frac{\kappa}{n}$. Now we generate the edges of $G(n,p)$ in two rounds: in the first round, each edge is inserted with probability $\frac{p}{2-p}$. In the second round, each edge that is not already present is inserted with probability $\frac{p}{2}$. In this way, each edge has probability
$$\frac{p}{2-p} + \Big( 1 - \frac{p}{2-p} \Big) \cdot \frac{p}{2} = p$$
of being included, as it should be. Since $\frac{p}{2-p} > \frac{\kappa}{n}$ for large enough $n$ by our assumption on $p$, the graph after the first round contains a component of at least $\frac{n}{2}$ vertices with high probability. We assume in the following that this is the case.

For each of the remaining vertices, the probability of not getting connected to this giant component in the second round is no greater than $(1-\frac{p}2)^{n/2} \leq e^{-pn/4}$. Thus the expected number of vertices that do not belong to the giant component after the second round is bounded above by $\frac{n}{2} \cdot e^{-pn/4}$. An application of Markov's inequality now shows that with high probability, no more than $ne^{-pn/8} = o(1/p)$ of the vertices are not part of the giant component (here, we are making use of the assumption that $pn \to \infty$). This completes the proof.
\end{proof}

\begin{lemma}\label{trees_lower_bound}
There exists a constant $a  > 0$ such that
$$\tot(G(n,p)) \geq  p^n n! e^{an}$$
holds with high probability provided that $p \geq \frac{\log n}{2n}$.
\end{lemma}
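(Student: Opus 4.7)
The plan is to combine the giant component lemma with an asymptotic estimate of the expected number of subtrees of a suitable order, followed by a concentration argument. By the preceding lemma, with high probability $G(n,p)$ has a connected component $C$ with $|C| \geq n - o(1/p)$, and since $\tot(G) \geq \tot(C)$ it suffices to control $\st_k(G)$ from below for a single well-chosen value of $k$.

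By Cayley's formula the expected number of $k$-vertex subtrees is
$$\Ex[\st_k(G(n,p))] = \binom{n}{k} k^{k-2} p^{k-1},$$
and comparing consecutive terms gives the ratio $\Ex[\st_{k+1}]/\Ex[\st_k] \approx e(n-k)p$, so this expectation is maximised near $k^* = n - \lfloor 1/(ep) \rfloor$. A short Stirling calculation shows
$$\log \Ex[\st_{k^*}] - \log(p^n n! e^{an}) \geq (1 - a - o(1))\, n + \frac{1}{ep},$$
so for any fixed $a \in (0,1)$ the expectation exceeds the target by an exponentially large factor.

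To convert this into a high-probability statement I would apply the second moment method to $\st_{k^*}$. Writing
$$\Var[\st_{k^*}] = \sum_{T_1, T_2} \bigl(p^{|E(T_1) \cup E(T_2)|} - p^{|E(T_1)| + |E(T_2)|}\bigr),$$
only pairs of subtrees sharing at least one edge contribute, and one has to bound this double sum by $o(\Ex[\st_{k^*}]^2)$. Since almost all $k^*$-vertex subtrees live inside the giant component, whose vertex degrees concentrate around $np$ by Chernoff bounds, the correlations between pairs of large subtrees should be tame enough to push through such a computation; Chebyshev then yields $\st_{k^*} \geq \tfrac12 \Ex[\st_{k^*}]$ with high probability, which combined with the previous estimate finishes the proof.

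The main obstacle will be precisely the second moment estimate in the very sparse regime, where Janson's central limit theorem for spanning trees no longer applies and intersecting nearly-spanning subtrees have to be enumerated by hand according to their edge-overlap pattern. A cleaner alternative that avoids variance bounds entirely is to build subtrees constructively inside $C$: root at each typical vertex and grow a subtree by a BFS-style exploration, using Chernoff concentration to guarantee that the number of outgoing edges from a subtree of current size $s$ stays close to $s(n-s)p$ uniformly along the growth. Dividing the resulting count of ordered growth sequences by the trivial over-counting factor $k!$ then produces the required lower bound on $\st_k$ and hence on $\tot(G)$.
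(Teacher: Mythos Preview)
Both strategies you sketch have real gaps, and they are exactly the ones that make this lemma delicate.

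\medskip

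\textbf{Second moment.} For $p$ as small as $\frac{\log n}{2n}$, the random variable $\st_{k^*}(G)$ is \emph{not} concentrated around its mean in the sense required by Chebyshev. Already for spanning trees, Janson's theorem tells you that $\log \st_n$ fluctuates by order $p^{-1/2}$ around a typical value that is roughly $\log \Ex[\st_n] - 1/p$; in particular the median of $\st_n$ is smaller than its mean by a factor $e^{-\Theta(1/p)}$, and $\Var[\st_n]/\Ex[\st_n]^2 \asymp e^{2/p}-1 \to \infty$. The same phenomenon afflicts $\st_{k^*}$ for $k^*$ this close to $n$, so ``$\Var = o(\Ex^2)$ hence $\st_{k^*} \ge \tfrac12 \Ex$ w.h.p.'' simply fails. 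You could try to aim for the much weaker $\st_{k^*}\ge \Ex\cdot e^{-(1-a)n}$, but the second moment method does not give that either.

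\medskip

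\textbf{Constructive approach with over-counting $k!$.} Dividing the number of growth sequences by $k!$ is too crude and kills precisely the exponential gain you need. In $K_n$, the number of ways to grow a spanning tree one vertex at a time from a fixed root is $\prod_{s=1}^{n-1} s(n-s) = [(n-1)!]^2$, while the number of spanning trees is $n^{n-2}$; hence the \emph{average} over-count per tree is $[(n-1)!]^2/n^{n-2} \sim n!\,e^{-n}$, not $n!$. Dividing the growth count in $G(n,p)$ (which is $\approx p^{n-1}[(n-1)!]^2$) by $n!$ gives only $p^{n-1} n!/n^2 = p^n n!\, e^{o(n)}$, missing the target by the factor $e^{an}$. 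A tree that is a path is over-counted only twice, so you cannot simply divide by less than $k!$ without further information.

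\medskip

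The paper's proof addresses exactly this: it builds trees by a growth process, but then shows that most of the trees produced have at least a constant fraction of non-leaves. Using the hook-length formula for linear extensions of a rooted forest, a tree with $\ge cn$ non-leaves is over-counted by at most $(n-n_0)!\cdot 2^{-cn}$ vertex orders, and this recovered $2^{cn}$ is the source of the constant $a$. Controlling the leaf count of the constructed trees (and excluding by a first-moment argument the exceptional trees with too many leaves) is the missing idea in your outline.
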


\begin{proof}
Set $n_0 = \lfloor \sqrt{n/p} \rfloor$, and fix a set $A_0$ of $n_0$ vertices of $G = G(n,p)$. Since $pn_0 \to \infty$ by our assumptions on $p$, the subgraph induced by these $n_0$ vertices has a giant component that contains all but at most $o(1/p)$ vertices with high probability by the previous lemma. Let us denote the vertex set of this giant component by $A_1$, and set $n_1 = |A_1|$, so that $n_1 = n_0 - o(1/p)$. In the following, we condition on $A_1$ being a specific set of vertices, and consider $A_1$ fixed.

Next, consider an order $v_1,v_2,\ldots,v_{n-n_0}$ of the vertices that do not belong to $A_0$. Set $\epsilon = (\log n)^{-1/6}$. Inductively, we define further sets $A_2,A_3,\ldots, A_{n-n_0+1}$ of vertices as follows:
\begin{itemize}
\item If $v_i$ has at least $(1-\epsilon)p|A_i|$ neighbours among the vertices of $A_i$, set $A_{i+1} = A_i \cup \{v_i\}$.
\item Otherwise, $A_{i+1} = A_i$. In this case, we call $v_i$ a failure.
\end{itemize}
Observe that $|A_i| \geq |A_1| = n_1$. An application of the first Chernoff bound stated in Lemma~\ref{lem:chernoff} shows that a vertex is a failure with probability at most $e^{-\epsilon^2 pn_1/2} = e^{-\epsilon^2 \sqrt{p n}/2 + O(1)}$. Let us now call the order $v_1,v_2,\ldots,v_{n-n_0}$ of the vertices that do not belong to $A_0$ successful if the number of failures is no greater than $f = \lfloor n e^{-\epsilon^2 \sqrt{p n}/4} \rfloor$. By the Markov inequality, any fixed order of vertices is successful with high probability.

\medskip

The set of vertices $A_{n-n_0+1}$ has at least $n_1 + n - n_0 - f$ vertices for a successful order. We now construct subtrees of $G$ that are spanning trees of this set in the following way:
\begin{itemize}
\item Start with any spanning tree $T_1$ of $A_1$. This is possible since we are assuming that the graph induced by $A_1$ is connected.
\item Add all vertices that belong to $A_{n-n_0+1}$ (i.e., all vertices that are not failures) to the tree according to the fixed order of vertices by attaching each of the vertices $v_i$ to one of its neighbours in $|A_i|$. For each vertex $v_i$ that is added in this way, we have at least $(1-\epsilon)p|A_i|$ possible ways to do so.
\end{itemize}

This procedure gives us at least
$$\prod_{j=1}^{n-n_0-f} \big( (1-\epsilon)p(n_1+j-1)\big)$$
different trees. We can estimate this quantity as follows:
\begin{align*}
\prod_{j=1}^{n-n_0-f} \big( (1-\epsilon)p(n_1+j-1)\big) &= (1-\epsilon)^{n-n_0-f}p^{n-n_0-f} \frac{(n+n_1-n_0-f-1)!}{(n_1-1)!} \\
&\geq (1-\epsilon)^n p^{n-n_0} \frac{n!}{n_1!} n^{n_1-n_0-f-1} \\
&\geq (1-\epsilon)^n p^{n-n_0} \frac{n!}{n_0^{n_0}} n^{n_1-n_0-f-1} \\
&= p^n n! (1-\epsilon)^n (pn_0)^{-n_0} n^{n_1-n_0-f-1} \\
&= p^n n! \exp \Big( n \log(1-\epsilon) - n_0 \log(pn_0) + (n_1-n_0-f-1) \log n\Big).
\end{align*}
Observe that
\begin{itemize}
\item $n \log(1-\epsilon) = o(n)$ by our choice of $\epsilon$,
\item $n_0 \log (pn_0) \leq \sqrt{n/p} \log \sqrt{np} = n \frac{\log (np)}{2\sqrt{np}} = o(n)$ since $np \to \infty$ by our assumptions on $p$,
\item $(n_0-n_1)\log n = o(1/p \cdot \log n) = o(n)$ by definition of $n_1$,
\item $(f+1)\log n = o(n)$ by definition of $f$ (note that $e^{-\epsilon^2 \sqrt{pn}/4}$ goes faster to $0$ than any power of $\log n$).
\end{itemize}
In conclusion, the number of different trees we obtain is $p^n n! e^{o(n)}$ for every successful vertex order (and choice of spanning tree $T_1$ on $A_1$).

For the argument that follows, we need to show that most of these trees do not have too many leaves. To this end, we first determine a bound on the number of ways to successively attach at least $n - n_0 - f$ of the vertices $v_1,v_2,\ldots$ in such a way that at least $\frac{5n}{6}$ of them are leaves. Suppose that $n-n_0-j$ vertices (where $j \leq f$) of the vertices have been chosen to become part of the tree (which can be done in $\binom{n-n_0}{j}$ ways), and that we are adding them one by one, starting from $T_1$, to obtain a tree (not necessarily a subtree of $G$). Clearly, the number of ways to do so is
$$n_1(n_1+1) \cdots (n_1+n-n_0-j-1).$$
Now we estimate the number of ways to do this in such a way that the number of leaves at the end is at least $\frac{5n}{6}$. In our procedure, if a vertex ever becomes a non-leaf, it stays a non-leaf. Thus the number of non-leaves never exceeds $\frac{n}{6}$. Consequently, once the number of vertices in our tree reaches $\lceil\frac{n}{3} \rceil$, at least half of the vertices are leaves. Of the remaining $N = n - n_0 - \lceil\frac{n}{3} \rceil - j = \frac{2n}{3} - o(n)$ vertices, at most $\frac{n}{6}$ can be attached to a leaf. If we were to attach all these remaining vertices randomly, then the probability of attaching to a leaf would always be at least $\frac12$, and the probability that we attach to a leaf at most $\frac{n}{6}$ times would be at most
$$\sum_{i \leq n/6} \binom{N}{i} 2^{-N} \leq e^{-\kappa n}$$
for some $\kappa > 0$, if $n$ is sufficiently large (by the Chernoff bound of Lemma~\ref{lem:chernoff}). This means that the number of ways to obtain a tree with $n_1+n-n_0-j$ vertices for which at least $\frac{5n}{6}$ of the vertices outside of $A_1$ are leaves is at most
$$n_1(n_1+1) \cdots (n_1+n-n_0-j-1) e^{-\kappa n} = \frac{(n_1+n-n_0-j-1)!}{(n_1-1)!} e^{-\kappa n}.$$
Each of these trees has probability $p^{n-n_0-j}$ to be a subtree of $G$ (given that $T_1$ is). Therefore, the expected number of such trees with at least $\frac{5n}{6}$ leaves is at most
\begin{equation}\label{eq:exnum_trees}
\sum_{j \leq f} \binom{n-n_0}{j} p^{n-n_0-j} \frac{(n_1+n-n_0-j-1)!}{(n_1-1)!} e^{-\kappa n}.
\end{equation}
For sufficiently large $n$, we have $n_1+n-n_0-j \geq n_1+n-n_0-f \geq \frac{1}{p}$ for all $j \leq f$, thus
$$p^{n-n_0-j} \frac{(n_1+n-n_0-j-1)!}{(n_1-1)!} \leq p^{n-n_1} \frac{(n-1)!}{(n_1-1)!} \leq \frac{p^nn!}{n_1!p^{n_1}},$$
so the expression in~\eqref{eq:exnum_trees} is less than or equal to
$$\sum_{j \leq f} \binom{n-n_0}{j} \frac{p^nn!}{n_1!p^{n_1}} e^{-\kappa n}.$$
Since $f = o(n)$, we have
$$\sum_{j \leq f} \binom{n-n_0}{j} = e^{o(n)},$$
and another simple calculation shows that $n_1!p^{n_1} = e^{o(n)}$. Therefore, the expected number of trees with at least $\frac{5n}{6}$ leaves outside of $A_1$ is bounded above by
$$p^n n! e^{-\kappa n + o(n)}.$$
By the Markov inequality, there are therefore at most $p^n n! e^{-\kappa n/2}$ such trees with high probability (all of this still for a fixed vertex order).

Since we get $p^n n! e^{o(n)}$ trees from every successful vertex order, it follows that a fixed vertex order generates, with high probability, $p^n n! e^{o(n)}$ trees with the property that at most $\frac{5n}{6}$ of the vertices that do not belong to $A_1$ are leaves.

Applying the Markov inequality one more time, we see that with high probability, this statement holds for at least half of the $(n-n_0)!$ possible vertex orders.
This gives us $p^n n! (n-n_0)! e^{o(n)}$ trees, but of course a given tree may have been counted several times in this total number, so we need to estimate how often trees are counted.

Here, the bound on the number of leaves comes into play: given a specific tree, we bound the number of vertex orders it can arise from. Fixing some root vertex in $A_1$, the tree becomes a poset by the successor relation, which induces a partial order on those vertices that lie outside of $A_1$ (here, vertices that are not part of the tree are considered to be incomparable to all others). A tree can only come from a certain vertex order if that order is a linear extension of the partial order that was just described. It is well known (see \cite[Eq.~(5)]{Ruskey1992Generating} or \cite[Section 5.1.4, Ex.~20]{Knuth1998art}) that the number of linear extensions of a rooted forest $F$ is given by the formula
$$|F|! \prod_{v \in F} \frac{1}{s(v)},$$
where $s(v)$ is the number of successors of $v$, including $v$ itself, and the product is over all vertices. Note that $s(v) \geq 2$ as soon as $v$ is not a leaf. Thus each of the aforementioned trees, for which at least $\frac{n}{6} - o(n)$ of the vertices are not leaves, arises from at most
$$(n-n_0)! 2^{-n/6 + o(n)}$$
different vertex orders. So we can finally conclude that the number of trees of $G$ is, with high probability, at least
$$p^n n! (n-n_0)! e^{o(n)} \cdot \frac{2^{n/6 + o(n)}}{(n-n_0)!} = p^n n! e^{(n \log 2)/6 + o(n)},$$
which proves the lemma for any $a < (\log 2)/6$.
\end{proof}

\begin{lemma}\label{few_leaves}
For every $\alpha > 0$, there exists a $\beta > 0$ such that the number of trees with $n$ labelled vertices and fewer than $\beta n$ leaves is less than $n! e^{\alpha n}$ for sufficiently large $n$.
\end{lemma}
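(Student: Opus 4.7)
My plan is to bound the number of labelled trees with few leaves directly via the Prüfer correspondence, which keeps the combinatorial structure transparent. Under Prüfer's bijection a labelled tree on $[n]$ corresponds to a sequence in $[n]^{n-2}$, and a vertex is a leaf precisely when it does not appear in the sequence. A tree therefore has leaf set $L$ of size $\ell$ if and only if its Prüfer sequence is a surjection $[n-2]\to[n]\setminus L$, giving the exact identity
$$T_\ell(n) = \binom{n}{\ell}(n-\ell)!\, S(n-2, n-\ell) = \frac{n!}{\ell!}\,S(n-2, n-\ell),$$
where $T_\ell(n)$ denotes the number of labelled trees with exactly $\ell$ leaves and $S$ is the Stirling number of the second kind.

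The key step is a sharp-enough upper bound on $S(n-2,n-\ell)$ when $\ell$ is small. I would use the following overcount: first select $n-\ell$ ``representatives'' from $[n-2]$, one per block, in $\binom{n-2}{n-\ell}$ ways, and then assign each of the remaining $\ell-2$ elements to one of the $n-\ell$ blocks, for a further factor of $(n-\ell)^{\ell-2}$. Since every partition is produced at least once, this gives
$$S(n-2, n-\ell) \leq \binom{n-2}{\ell-2}(n-\ell)^{\ell-2}.$$
Substituting into the formula above, bounding $\binom{n-2}{\ell-2} \leq n^{\ell-2}/(\ell-2)!$ and $(n-\ell)^{\ell-2} \leq n^{\ell-2}$, and applying Stirling's estimate to $\ell!$ and $(\ell-2)!$, I arrive at a bound of the shape
$$T_\ell(n) \leq n!\cdot\frac{\ell^{2}}{n^{4}}\exp\bigl(2\ell(1+\log(n/\ell))\bigr).$$

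The function $\ell \mapsto 2\ell(1+\log(n/\ell))$ is monotone increasing on $(0,n)$, so the maximum over $\ell < \beta n$ is attained near $\ell = \beta n$, where it equals $2\beta n(1-\log\beta)$; summing over $\ell$ costs only a polynomial factor. Since $\beta\log\beta \to 0$ as $\beta \to 0^{+}$, the rate $2\beta(1-\log\beta)$ also tends to $0$, so given any $\alpha>0$ I can pick $\beta>0$ small enough that $2\beta(1-\log\beta) < \alpha/2$, and then $\sum_{\ell<\beta n} T_\ell(n) < n!\, e^{\alpha n}$ for all sufficiently large $n$, the polynomial prefactor being absorbed into the remaining $\alpha n/2$ in the exponent.

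The main obstacle is obtaining a tight-enough bound on $S(n-2,n-\ell)$: the naive surjectivity-free estimate $T_\ell(n) \leq \binom{n}{\ell}(n-\ell)^{n-2}$ retains an unwanted factor of order $e^n$ coming from $(n-\ell)^{n-2}$, which does not vanish with $\beta$. Keeping track of the surjectivity---which forces most of the $n-2$ Prüfer positions to record first appearances when $\ell$ is small---is precisely what drives the exponential rate to zero as $\beta \to 0$.
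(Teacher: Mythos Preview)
Your proof is correct and follows essentially the same route as the paper's. Both arguments use the Pr\"ufer/Cayley bijection to express the number of trees with $\ell$ leaves as $\binom{n}{\ell}\Sur(n-2,n-\ell)$ (equivalently $\frac{n!}{\ell!}S(n-2,n-\ell)$), bound the surjection count by the same overcount---fix one preimage per target, then assign the remaining $\ell-2$ elements freely---and finish with Stirling's formula to show the exponential rate tends to $0$ as $\beta\to0^+$; the only cosmetic differences are that the paper first pulls out the maximal binomial coefficient and bounds $\sum_{\ell\le L}\Sur(n-2,n-\ell)$ in one stroke, whereas you bound each $T_\ell(n)$ individually and use monotonicity of $\ell\mapsto 2\ell(1+\log(n/\ell))$, yielding a slightly weaker but still vanishing rate $2\beta(1-\log\beta)$ in place of the paper's $\beta-2\beta\log\beta-(1-\beta)\log(1-\beta)$.
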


\begin{proof}
There is a classical bijective correspondence (going back to Cayley) between labelled trees with $n$ vertices and functions from $\{1,2,\ldots,n-2\}$ to $\{1,2,\ldots,n\}$, see~\cite[Chapter 1.7]{Harary1973Graphical}. In this bijection, the non-leaves correspond to the elements of the image. It follows that the number of trees with $n$ labelled vertices and exactly $\ell$ leaves is $\binom{n}{\ell} \cdot \Sur(n-2,n-\ell)$, where $\Sur(a,b)$ is the number of surjections from a set of $a$ elements to a set of $b$ elements. It is our goal to estimate the sum
$$\sum_{\ell < \beta n} \binom{n}{\ell} \Sur(n-2,n-\ell).$$
If $\beta \leq \frac12$, then we have
$$\sum_{\ell < \beta n} \binom{n}{\ell} \Sur(n-2,n-\ell) \leq \binom{n}{\lfloor \beta n \rfloor} \sum_{\ell < \beta n} \Sur(n-2,n-\ell)$$
by the unimodality of the binomial coefficients. Now consider arbitrary functions from the set $\{1,2,\ldots,n-2\}$ to itself. The number of functions for which $\{1,2,\ldots,n-L\}$ is contained in the image is greater than or equal to $\sum_{\ell = 2}^L \Sur(n-2,n-\ell)$, since clearly all surjections from $\{1,2,\ldots,n-2\}$ to some set of the form $\{1,2,\ldots,n-\ell\}$ (where $\ell \leq L$) have this property. Now let us bound the number of functions $f$ with this property: first, for every $j \in \{1,2,\ldots,n-L\}$, we pick an $i \in \{1,2,\ldots,n-2\}$ such that $f(i) = j$. This can be done in $(n-2)(n-3)\cdots (L-1)$ different ways. Now there are still $L-2$ elements in $\{1,2,\ldots,n-2\}$ left for which $f$ has no value yet. These values can be assigned in $(n-2)^{L-2}$ ways. Note that some functions are counted more than once in this way, but since we are only interested in upper bounds, this is immaterial. So we find that
$$\sum_{\ell=2}^{L} \Sur(n-2,n-\ell) \leq \frac{(n-2)!}{(L-2)!} (n-2)^{L-2}.$$
Thus
$$\sum_{\ell < \beta n} \binom{n}{\ell} \Sur(n-2,n-\ell) \leq \binom{n}{\lfloor \beta n \rfloor} \frac{(n-2)!}{(\lfloor \beta n \rfloor-2)!} (n-2)^{\lfloor \beta n \rfloor-2}.$$
By Stirling's formula,
$$\binom{n}{\lfloor \beta n \rfloor} \frac{(n-2)!}{(\lfloor \beta n \rfloor-2)!} (n-2)^{\lfloor \beta n \rfloor-2} = n! \exp \Big( \big( \beta - 2\beta \log \beta - (1-\beta)\log(1-\beta)\big) n + O(\log n) \Big).$$
Since $\beta - 2\beta \log \beta - (1-\beta)\log(1-\beta)$ tends to $0$ as $\beta \to 0^+$, we can make it smaller than $\alpha$ by choosing a sufficiently small $\beta$. The statement of the lemma then follows immediately.
\end{proof}

\begin{lemma}\label{lem:maxdeg_weakbound}
Suppose that $p \geq \frac{\log n}{2n}$. With high probability, the maximum degree of $G(n,p)$ is at most $4np$. 
\end{lemma}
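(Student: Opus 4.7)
The plan is to apply the upper-tail Chernoff bound stated in Lemma~\ref{lem:chernoff} to the degree of a single vertex and then take a union bound over all $n$ vertices. The degree of any fixed vertex $v$ in $G(n,p)$ is the sum of $n-1$ independent $\text{Bernoulli}(p)$ variables, hence follows a $\mathrm{Bin}(n-1,p)$ distribution with mean $\mu = (n-1)p \leq np$.

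To bound $\Prob(\deg(v) \geq 4np)$, I would apply the second inequality of Lemma~\ref{lem:chernoff} with $t = 4np - \mu \geq 3np$. Since $\mu + t/3 \leq np + (4np)/3 \leq 2np$ (being generous with constants), the exponent $-t^2/(2(\mu+t/3))$ is at most $-(3np)^2/(4np) = -9np/4$. Plugging in the assumed lower bound $p \geq (\log n)/(2n)$ gives $\Prob(\deg(v) \geq 4np) \leq \exp(-9\log n/8) = n^{-9/8}$.

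A union bound over the $n$ vertices then yields
$$\Prob\bigl(\Delta(G(n,p)) \geq 4np\bigr) \leq n \cdot n^{-9/8} = n^{-1/8} \to 0,$$
which is exactly the desired conclusion. There is no real obstacle here: this is a routine Chernoff-plus-union-bound argument, and the constant $4$ in $4np$ is chosen precisely to give plenty of slack in the Chernoff exponent so that the single-vertex bound beats $1/n$ by a polynomial factor under the minimal assumption $p \geq (\log n)/(2n)$. (If $p$ were allowed to be smaller than $(\log n)/n$, a polynomial tail bound of this form would fail and one would need to keep track of the Poissonian regime more carefully; here the hypothesis on $p$ is exactly what makes the plain union bound work.)
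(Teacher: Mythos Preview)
Your proof is correct and essentially identical to the paper's: both apply the upper-tail Chernoff bound of Lemma~\ref{lem:chernoff} to the $\mathrm{Bin}(n-1,p)$ degree of a single vertex with $t$ taken to be roughly $3np$, obtain a per-vertex failure probability of order $n^{-9/8}$ from the hypothesis $p \geq (\log n)/(2n)$, and conclude via a union bound (the paper phrases this last step as Markov's inequality on the expected number of high-degree vertices, which is the same thing).
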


\begin{proof}
We can apply the second Chernoff bound stated in Lemma~\ref{lem:chernoff} to the degree of a single vertex. It shows that the probability for the degree to be greater than $4(n-1)p$ is at most (taking $t = 3(n-1)p$) $e^{-9(n-1)p/4} = O(n^{-9/8})$ by our assumption on $p$. Thus the expected number of vertices whose degree is greater than $4(n-1)p$ is $O(n^{-1/8})$, and Markov's inequality shows that the maximum degree is at most $4np$ with high probability.
\end{proof}

We use this bound on the maximum degree to prove a modified version of the upper bound in Lemma~\ref{lem:Pk1}.

\begin{lemma}\label{lem:Pk_upper_hp}
Suppose that $p \geq \frac{\log n}{2n}$, and consider the random graph $G = G(n,p)$. There exists a constant $b > 0$ such that, with high probability, the inequality
$$\prs_k(G) \leq \st_{n-k}(G) b^k p^k n^k$$
holds for all $k \geq 0$.
\end{lemma}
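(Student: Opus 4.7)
The plan is to bypass the forest-plus-attachment counting used in Lemma~\ref{lem:Pk1}, whose factor $(1 + e/\Delta(G))^{k(k-1)/2}$ blows up and ruins the bound as soon as $k$ is comparable to $\sqrt{\Delta(G)}$. Instead, I would encode each spanning tree extension of a fixed subtree by a \emph{rooted-tree parent function}, which gives a bound that is uniform in $k$ at essentially no cost.

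First, I would invoke Lemma~\ref{lem:maxdeg_weakbound} to restrict attention to the high-probability event $\Delta(G) \leq 4np$. Next, I would fix any subtree $S$ of $G$ on $n-k$ vertices and any root vertex $v_0 \in V(S)$. To each spanning tree $T \supseteq S$, viewed as rooted at $v_0$, I associate the function $p_T \colon W \to V(G)$ with $W := V(G) \setminus V(S)$, where $p_T(v)$ is the parent of $v$ in $T$. Since $T \subseteq G$, we have $p_T(v) \in N_G(v)$ for every $v \in W$. Moreover, the edges of $T$ not lying in $S$ are precisely $\{v\,p_T(v) : v \in W\}$ (there are exactly $k$ of them, and together with $S$ they form the spanning tree), so the pair $(S, p_T)$ determines $T$ uniquely. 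Hence the number of spanning trees extending $S$ is at most the number of functions $p \colon W \to V(G)$ with $p(v) \in N_G(v)$, which is
$$\prod_{v \in W} \deg_G(v) \;\leq\; \Delta(G)^k \;\leq\; (4np)^k.$$

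Summing over the $\st_{n-k}(G)$ choices of $S$ gives $\prs_k(G) \leq \st_{n-k}(G)\cdot 4^k p^k n^k$, which is the claimed inequality with $b = 4$ (any larger constant works equally well). The approach requires no case distinction on $k$, so a single line of counting disposes of all $k \geq 0$ simultaneously. There is essentially no obstacle: the only nontrivial point is the observation that rooting at $v_0$ converts ``spanning tree extending $S$'' into ``neighbour-assignment on $W$'' injectively, and this is immediate because a tree has one fewer edge than a cycle and $S$ contributes precisely $n-k-1$ of the $n-1$ edges of $T$.
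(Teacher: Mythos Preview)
Your argument is correct and genuinely different from the paper's. The paper adapts the forest-plus-attachment count of Lemma~\ref{lem:Pk1}, replacing the crude bound $\binom{k(k-1)/2}{r}$ on the number of $r$-edge forests by $\binom{k\Delta(G)/2}{r}$ (using that the $k$ vertices outside $S$ span at most $k\Delta(G)/2$ edges); summing over $r$ then produces the factor $(1+e/\Delta(G))^{k\Delta(G)/2}\le e^{ek/2}$, which combined with $\Delta(G)\le 4np$ yields $b=4e^{e/2}$. Your parent-function encoding sidesteps this decomposition entirely: rooting $T$ at a vertex of $S$ forces every edge of $T\setminus S$ to be of the form $\{v,p_T(v)\}$ with $v\in W$, so the extension is determined by a single neighbour-choice per vertex of $W$, giving $\Delta(G)^k$ directly and hence $b=4$. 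Your route is shorter, avoids the AM--GM step and the sum over $r$, and produces a strictly better constant; the paper's route has the minor expository advantage of visibly paralleling the earlier Lemma~\ref{lem:Pk1}.
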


\begin{proof}
We adapt the argument that we used in the proof of Lemma~\ref{lem:Pk1}. By Lemma~\ref{lem:maxdeg_weakbound}, we can assume that the maximum degree $\Delta(G)$ is at most $4np$. Given any subtree $S$ of $G$ with $n-k$ vertices, we estimate the number of ways to extend it to a spanning tree $T$. The remaining $k$ vertices induce at most $k \Delta(G)/2$ edges, so the number of ways to choose a forest of $r$ edges on this set of vertices is at most $\binom{k \Delta(G)/2}{r}$. As in the proof of Lemma~\ref{lem:Pk1}, the number of ways to extend $S$ and a forest of $r$ edges to a spanning tree $T$ is at most $\Delta(G)^{k-r} e^r$. Therefore, we find that
\begin{align*}
\prs_k(G) &\leq \st_{n-k}(G) \sum_{r=0}^{k-1} \binom{k \Delta(G)/2}{r} \Delta(G)^{k-r} e^r \leq \st_{n-k}(G)  \Delta(G)^k \Big( 1 + \frac{e}{\Delta(G)} \Big)^{k \Delta(G)/2} \\
& \leq \st_{n-k}(G)  \Delta(G)^k \Big( \exp \Big( \frac{e}{\Delta(G)} \Big) \Big)^{k \Delta(G)/2} = \st_{n-k}(G)  \Delta(G)^k  e^{ek/2}.
\end{align*}
Now we use the inequality $\Delta(G) \leq 4np$ to obtain
$$\prs_k(G) \leq \st_{n-k}(G) (4np)^k e^{ek/2} = \st_{n-k}(G) (4e^{e/2})^k p^k n^k.$$
In other words, the desired statement holds with $b = 4e^{e/2}$.
\end{proof}

\begin{proof}[Proof of Theorem~\ref{thm:dense_strong}]
We can assume that $p \geq \frac{\log n}{2n}$: if not, then the graph $G = G(n,p)$ is with high probability disconnected (in fact, it has an isolated vertex with high probability, see e.g.~\cite[Corollary 3.31]{Janson2000Random}), so that $\st_n(G) = \spp(G) = 0$. Thus Lemma~\ref{trees_lower_bound} applies. Let $a$ be the constant from Lemma~\ref{trees_lower_bound} for which
\begin{equation}\label{eq:tot_lower}
\tot(G) \geq p^n n! e^{an}
\end{equation}
holds with high probability, so that we can assume that this inequality is satisfied. Now pick a constant $\beta > 0$ according to Lemma~\ref{few_leaves} so that the number of trees with $n$ labelled vertices and fewer than $\beta n$ leaves is less than $n! e^{an/4}$ for sufficiently large $n$. Since each of these trees has probability $p^{n-1}$ of occurring as spanning tree in $G$, the expected number of spanning trees in $G$ with fewer than $\beta n$ leaves is less than $n! e^{an/4} p^{n-1}$. By the Markov inequality, the number of such spanning trees is therefore less than $p^n n! e^{an/2}$ with high probability. Hence we can assume that this is the case as well.

\medskip

Next, note that the inequality of Theorem~\ref{thm:dense_strong} holds (for sufficiently large $n$) if $\st_n(G) < p^n n! e^{3an/4}$ by~\eqref{eq:tot_lower} and our assumption on $p$. So we may also assume that $\st_n(G) \geq p^n n! e^{3an/4}$. This means, however, that at least $\st_n(G)(1-e^{-an/4})$ spanning trees of $G$ have at least $\beta n$ leaves each. In view of Lemma~\ref{lem:est_by_leaves}, this implies that
$$\prs_k(G) \geq \st_n(G)\big(1-e^{-an/4}\big) \binom{\lceil \beta n \rceil}{k}$$
for every $k$. On the other hand, making use of the inequality in Lemma~\ref{lem:Pk_upper_hp} (which is also satisfied with high probability), we obtain
$$\st_n(G)\big(1-e^{-an/4}\big) \binom{\lceil \beta n \rceil}{k} \leq \prs_k(G) \leq \st_{n-k}(G) b^k p^k n^k.$$
Thus
$$\frac{\st_{n-k}(G)}{\st_n(G)} \geq \big(1-e^{-an/4}\big) \binom{\lceil \beta n \rceil}{k}(bpn)^{-k}.$$
Summing over all values of $k$, we get
\begin{align*}
\frac{1}{\spp(G)} = \frac{\tot(G)}{\st_n(G)} &\geq \big(1-e^{-an/4}\big) \sum_{k \geq 0} \binom{\lceil \beta n \rceil}{k}(bpn)^{-k} = \big(1-e^{-an/4}\big) \Big( 1 + \frac{1}{b p n} \Big)^{\lceil \beta n \rceil} \\
&\geq \big(1-e^{-an/4}\big) \Big( 1 + \frac{1}{b p n} \Big)^{\beta n} =  \big(1-e^{-an/4}\big) e^{\beta/(b p) + o(1/p)}.
\end{align*}
This proves the desired inequality (with high probability, for sufficiently large $n$) for every $c < \beta/b$. 
\end{proof}

\section{Corollaries and final remarks}

Looking back over the proof of Theorem~\ref{thm:gnp} (the dense case), we see that we have also established the following as a side result:

\begin{cor}
Consider the random graph $G = G(n,p)$, and suppose that $p \to p_{\infty} > 0$. As $n \to \infty$, we have, for every fixed nonnegative integer $k$,
$$\frac{\st_{n-k}(G)}{\st_n(G)} \overset{p}{\to} \frac{1}{k!} (ep_{\infty})^{-k}.$$
\end{cor}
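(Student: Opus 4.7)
My plan is to observe that this corollary is essentially a direct consequence of the intermediate estimates already established in Section~\ref{sec:dense}, and requires almost no additional work. Specifically, Lemmas~\ref{lem:Pk1} and~\ref{lem:Pk2} provide two different asymptotic expressions for $\prs_k(G)$ that both hold with high probability for $k \leq n^{1/6}$ (given that the four events of Lemma~\ref{lem:four_statements} occur, which they do with high probability when $p \to p_\infty > 0$).

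First I would fix the nonnegative integer $k$; then in particular $k \leq n^{1/6}$ for all sufficiently large $n$, so both lemmas apply. Equating the two expressions gives
$$\st_{n-k}(G)\, p^k n^k \bigl(1 + O(n^{-1/6})\bigr) = \st_n(G)\, \frac{n^k}{e^k k!} \bigl(1 + O(n^{-1/6})\bigr),$$
whence
$$\frac{\st_{n-k}(G)}{\st_n(G)} = \frac{1}{p^k e^k k!} \bigl(1 + O(n^{-1/6})\bigr),$$
exactly as in the proof of Theorem~\ref{thm:dense}. Since the event on which this estimate is valid has probability tending to $1$, and since $p \to p_\infty > 0$ implies $\frac{1}{p^k e^k k!} \to \frac{1}{(ep_\infty)^k k!}$ deterministically, the convergence in probability
$$\frac{\st_{n-k}(G)}{\st_n(G)} \overset{p}{\to} \frac{1}{k!}(ep_\infty)^{-k}$$
follows immediately.

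There is really no obstacle here; the corollary is a pointwise reading of a bound that was already proved uniformly in $k$ for $k \leq n^{1/6}$. The only thing to be careful about is to phrase the argument so that one records convergence in probability (which follows from the ``with high probability'' nature of the events in Lemma~\ref{lem:four_statements}) rather than a deterministic statement. No new estimates, no new lemmas, and no delicate limit interchange are needed.
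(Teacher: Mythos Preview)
Your proposal is correct and matches the paper's approach exactly: the paper presents this corollary as an immediate side result of the proof of Theorem~\ref{thm:dense}, where the identity $\frac{\st_{n-k}(G)}{\st_n(G)} = \frac{1}{p^k e^k k!}\bigl(1+O(n^{-1/6})\bigr)$ was derived (with high probability) by combining Lemmas~\ref{lem:Pk1} and~\ref{lem:Pk2} under the events of Lemma~\ref{lem:four_statements}. No additional argument is given or needed.
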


In other words, the number of vertices missing in a random subtree asymptotically follows a Poisson distribution.

In fact, the same proof still works if $k$ and $\frac{1}{p}$ both grow sufficiently slowly with $n$, in which case we have
$$k! (ep_{\infty})^k \frac{\st_{n-k}(G)}{\st_n(G)} \overset{p}{\to} 1.$$
It might be interesting to establish the precise thresholds for $k$ and $p$ up to which this statement remains true.

The average number of edges in a random subtree is another parameter that was considered by Chin et al.~in \cite{Chin2018Subtrees}. Equivalently, one can also consider the average number of vertices, which is precisely $1$ greater. For this quantity, we obtain the following result in the dense case:

\begin{cor}
Let $\mu(G)$ denote the average number of edges in a randomly chosen subtree of $G$. Consider the random graph $G = G(n,p)$, and suppose that $p \to p_{\infty} > 0$. As $n \to \infty$, we have
$$n - \mu(G) \to 1 + \frac{1}{ep_{\infty}}.$$
\end{cor}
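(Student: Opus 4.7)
The plan is to rewrite $n - \mu(G)$ in terms of the ratios $\st_{n-j}(G)/\st_n(G)$ that were analysed in Section~\ref{sec:dense}, and then apply the same estimates that underlie Theorem~\ref{thm:dense}. Since the average number of vertices in a random subtree is $\mu(G)+1 = T(G)^{-1}\sum_{k=1}^n k\,\st_k(G)$, a short calculation (substituting $j = n-k$) yields
$$n - \mu(G) = 1 + \frac{1}{T(G)} \sum_{j=0}^{n-1} j\,\st_{n-j}(G) = 1 + \spp(G) \sum_{j=0}^{n-1} j\,\frac{\st_{n-j}(G)}{\st_n(G)}.$$
Since Theorem~\ref{thm:dense} already gives $\spp(G) \overset{p}{\to} e^{-1/(ep_\infty)}$, it suffices to show that the weighted sum converges in probability to $\frac{1}{ep_\infty} e^{1/(ep_\infty)}$.

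To this end, I would condition on the four high-probability events in Lemma~\ref{lem:four_statements}, under which Lemmas~\ref{lem:Pk1} and~\ref{lem:Pk2} yield
$$\frac{\st_{n-j}(G)}{\st_n(G)} = \frac{1}{(ep)^j j!}\bigl(1 + O(n^{-1/6})\bigr)$$
uniformly for $j \leq n^{1/6}$. Multiplying by $j$ and summing over this range gives
$$\sum_{0 \leq j \leq n^{1/6}} \frac{j}{(ep)^j j!}\bigl(1 + O(n^{-1/6})\bigr) = \frac{1}{ep} e^{1/(ep)} + O(n^{-1/6}),$$
which converges to $\frac{1}{ep_\infty} e^{1/(ep_\infty)}$ by the assumption $p \to p_\infty$.

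For the tail $j > n^{1/6}$, the estimates developed in the proof of Theorem~\ref{thm:lower} show that $\sum_{j > n^{1/6}} \st_{n-j}(G)/\st_n(G)$ decays faster than any negative power of $n$; inserting the extra factor $j \leq n$ changes this bound only by a factor of $n$, so the tail remains negligible. Combining the main term with the decaying tail and multiplying by the (bounded in probability) factor $\spp(G)$ yields
$$n - \mu(G) \overset{p}{\to} 1 + e^{-1/(ep_\infty)} \cdot \frac{1}{ep_\infty} e^{1/(ep_\infty)} = 1 + \frac{1}{ep_\infty}.$$
There is no real obstacle here: the only point requiring mild care is verifying that multiplying the sharp tail bounds from Section~\ref{sec:mindeg} by the linear factor $j$ does not disturb their super-polynomial decay, which is immediate from the factorials appearing in the denominators.
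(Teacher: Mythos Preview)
Your proposal is correct and is exactly the argument the paper has in mind: the corollary is stated without proof because it follows immediately from the Section~\ref{sec:dense} estimates (Lemmas~\ref{lem:Pk1} and~\ref{lem:Pk2}) together with the super-polynomial tail bounds from the proof of Theorem~\ref{thm:lower}, applied to the weighted sum $\sum_j j\,\st_{n-j}(G)/\st_n(G)$. Your observation that the extra linear factor~$j$ cannot undo the factorial decay of the tail is precisely the one additional check needed.
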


Moreover, we can combine Janson's Theorem~\ref{thm:janson}, Theorem~\ref{thm:dense} and Slutsky's theorem \cite[Chapter 5, Theorem 11.4]{Gut2013Probability} to obtain the following result on the distribution of the total number of subtrees:

\begin{cor}
Assume that $p \to p_{\infty} \in (0,1)$. Then we have, as $n \to \infty$,
$$\log \tot(G(n,p)) - \log (n^{n-2}p^{n-1}) \overset{d}{\to} N \Big( \frac{1-e+ep_{\infty}}{ep_{\infty}}, \frac{2-2p_{\infty}}{p_{\infty}} \Big).$$
\end{cor}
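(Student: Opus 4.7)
The identity $\tot(G) = \st_n(G)/\spp(G)$ immediately gives
$$\log \tot(G(n,p)) - \log(n^{n-2}p^{n-1}) = \bigl[\log \st_n(G(n,p)) - \log(n^{n-2}p^{n-1})\bigr] - \log \spp(G(n,p)),$$
so the plan is simply to show that the first bracket converges in distribution to a normal random variable with the right parameters, while the second term converges to a deterministic constant; Slutsky's theorem then yields the corollary.

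For the first bracket I would invoke Theorem~\ref{thm:janson}. Since $p\to p_\infty \in (0,1)$, the hypothesis $\liminf \sqrt{n}p >0$ is satisfied and the case $p\to 1$ does not arise, so Janson's theorem gives
$$p^{1/2}\Bigl(\log \st_n(G(n,p)) - \log(n^{n-2}p^{n-1}) + \tfrac{1-p}{p}\Bigr) \overset{d}{\to} N\bigl(0, 2(1-p_\infty)\bigr).$$
Because $p^{1/2}\to p_\infty^{1/2}$ and $\tfrac{1-p}{p}\to \tfrac{1-p_\infty}{p_\infty}$ are deterministic, a further application of Slutsky's theorem lets me divide by $p^{1/2}$ and subtract the deterministic drift to conclude
$$\log \st_n(G(n,p)) - \log(n^{n-2}p^{n-1}) \overset{d}{\to} N\Bigl(-\tfrac{1-p_\infty}{p_\infty},\, \tfrac{2(1-p_\infty)}{p_\infty}\Bigr).$$

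For the second term, Theorem~\ref{thm:dense} states $\spp(G(n,p))\overset{p}{\to} e^{-1/(ep_\infty)}$, so by the continuous mapping theorem
$$-\log \spp(G(n,p)) \overset{p}{\to} \tfrac{1}{ep_\infty}.$$
Applying Slutsky's theorem one final time, the sum of a weakly convergent sequence and a sequence converging in probability to a constant converges in distribution to the sum of the limits. The variance is unchanged at $\tfrac{2-2p_\infty}{p_\infty}$, while the mean becomes
$$-\tfrac{1-p_\infty}{p_\infty} + \tfrac{1}{ep_\infty} = \tfrac{1 - e + ep_\infty}{ep_\infty},$$
which matches the stated parameters exactly.

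Since both ingredients are already established earlier in the paper and the combination is purely formal, there is no real obstacle here; the only thing to keep track of is the careful extraction of the drift from Janson's normalization (in particular that the $p^{1/2}$ prefactor and the $\tfrac{1-p}{p}$ shift both depend on $n$ but converge to constants, which is exactly the setting where Slutsky applies). This is the routine bookkeeping step, and no further probabilistic input is needed.
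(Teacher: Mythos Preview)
Your proof is correct and follows exactly the route the paper indicates: combine Janson's Theorem~\ref{thm:janson} with Theorem~\ref{thm:dense} via Slutsky's theorem, using $\log \tot(G) = \log \st_n(G) - \log \spp(G)$. The bookkeeping on the mean and variance is handled correctly.
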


The argument fails if $p \to 0$ since the difference between $\log \tot(G(n,p))$ and $\log \st_n(G(n,p))$ grows at a rate of $\frac{1}{p}$, while the standard deviation of $\log \st_n(G(n,p))$ is only of order $\frac{1}{\sqrt{p}}$. However, it should be possible to adapt Janson's proof of Theorem~\ref{thm:janson} to obtain a lognormal limit law for $\tot(G(n,p))$ as long as $\liminf_{n \to \infty} \sqrt{n}p > 0$.

\section*{Acknowledgment}

The author would like to thank Svante Janson for many helpful comments.

\end{document}